\documentclass[12pt,reqno]{amsart}
\usepackage{amscd,amsmath,amsthm,amssymb}
\usepackage{color}
\usepackage{stmaryrd}
\usepackage{graphicx}  
\usepackage{leftidx}
\usepackage{tikz}
\usepackage{tikz-cd}



%
%
%

%
%
\def\frk{\mathfrak}               

\def\mm{{\frk m}}

%

\def\G{{\mathcal G}}

\def\pd{\textup{proj}\phantom{.}\!\textup{dim}}


\def\fb{{\mathbf f}}
\def\opn#1#2{\def#1{\operatorname{#2}}} 
%
\opn\chara{char} \opn\length{\ell} \opn\pd{pd} \opn\rk{rk}
\opn\projdim{proj\,dim} \opn\injdim{inj\,dim} \opn\rank{rank}
\opn\depth{depth} \opn\grade{grade} \opn\height{height}
\opn\embdim{emb\,dim} \opn\codim{codim}

\opn\Tr{Tr} \opn\bigrank{big\,rank}
\opn\superheight{superheight}\opn\lcm{lcm}
\opn\trdeg{tr\,deg}
	\opn\reg{reg} \opn\lreg{lreg} \opn\ini{in} \opn\lpd{lpd}
	\opn\size{size} \opn\sdepth{sdepth}
	\opn\link{link}\opn\fdepth{fdepth}\opn\lex{lex}
	\opn\tr{tr}
	\opn\type{type}
	\opn\gap{gap}
	\opn\diam{diam}
	\opn\Mod{Mod}
	\opn\revlex{revlex}
	%
	\opn\div{div} \opn\Div{Div} \opn\cl{cl} \opn\Cl{Cl}
	%
	%
	\opn\Spec{Spec} \opn\Supp{Supp} \opn\supp{supp} \opn\Sing{Sing}
	\opn\Ass{Ass} \opn\Min{Min}\opn\Mon{Mon}
	%
	%
	\opn\Ann{Ann} \opn\Rad{Rad} \opn\Soc{Soc}
	%
	%
	\opn\Im{Im} \opn\Ker{Ker} \opn\Coker{Coker} \opn\Am{Am}
	\opn\Hom{Hom} \opn\Tor{Tor} \opn\Ext{Ext} \opn\End{End}
	\opn\Aut{Aut} \opn\id{id}
	
	\opn\nat{nat}
	\opn\pff{pf}
	\opn\Pf{Pf} \opn\GL{GL} \opn\SL{SL} \opn\mod{mod} \opn\ord{ord}
	\opn\Gin{gin} \opn\Hilb{Hilb}\opn\sort{sort}
	\opn\PF{PF}\opn\Ap{Ap}
	\opn\dist{dist}
	%
	%
	\opn\aff{aff}
	\opn\relint{relint} \opn\st{st}
	\opn\lk{lk} \opn\cn{cn} \opn\core{core} \opn\vol{vol}  \opn\inp{inp} \opn\nilpot{nilpot}
	\opn\link{link} \opn\star{star}\opn\lex{lex}\opn\set{set}
	\opn\width{wd}
	\opn\Fr{F}
	\opn\QF{QF}
	\opn\G{G}
	\opn\type{type}\opn\res{res}
	\opn\conv{conv}
	\opn\sr{sr}
	\opn\gr{gr}
	
	%
	%
	
	\def\pot#1#2{#1[\kern-0.28ex[#2]\kern-0.28ex]}

	%
	%
	\opn\dirlim{\underrightarrow{\lim}}
	\opn\inivlim{\underleftarrow{\lim}}
	%
	%
	%

	\let\tensor=\otimes
	\let\iso=\cong

	%
	%
	\let\to=\rightarrow
	\let\To=\longrightarrow
	\def\Implies{\ifmmode\Longrightarrow \else
		\unskip${}\Longrightarrow{}$\ignorespaces\fi}
	\def\implies{\ifmmode\Rightarrow \else
		\unskip${}\Rightarrow{}$\ignorespaces\fi}
	\def\iff{\ifmmode\Longleftrightarrow \else
		\unskip${}\Longleftrightarrow{}$\ignorespaces\fi}

	\let\:=\colon
	\newtheorem{Theorem}{Theorem}[section]
	\newtheorem{Lemma}[Theorem]{Lemma}
	\newtheorem{Corollary}[Theorem]{Corollary}
	\newtheorem{Proposition}[Theorem]{Proposition}
	\newtheorem{Remark}[Theorem]{Remark}
	
	\newtheorem{Example}[Theorem]{Example}
	\newtheorem{Examples}[Theorem]{Examples}

	%
	%
	\let\epsilon\varepsilon
	\let\kappa=\varkappa
	%
	%
	\textwidth=15cm \textheight=22cm \topmargin=0.5cm
	\oddsidemargin=0.5cm \evensidemargin=0.5cm \pagestyle{plain}
	%
	%
	\def\qed{\ifhmode\textqed\fi
		\ifmmode\ifinner\hfill\quad\qedsymbol\else\dispqed\fi\fi}
	\def\textqed{\unskip\nobreak\penalty50
		\hskip2em\hbox{}\nobreak\hfill\qedsymbol
		\parfillskip=0pt \finalhyphendemerits=0}
	\def\dispqed{\rlap{\qquad\qedsymbol}}
	
	%
	\opn\dis{dis}
	\def\pnt{{\raise0.5mm\hbox{\large\bf.}}}
	\def\lpnt{{\hbox{\large\bf.}}}
	\opn\Lex{Lex}
	\opn\Max{Max}
	\opn\Shad{Shad}
	\opn\astab{astab}

	\opn\Fitt{Fitt}
	
	\opn\v{v}
	
	

	
	\begin{document}

	\title{Ideals and their Fitting ideals}
	\author{David Eisenbud, Antonino Ficarra, J\"urgen Herzog, Somayeh Moradi}

\address{Department of mathematics, University of California at Berkeley and the mathematical Science research institute, Berkeley, CA, 94720, USA 	
	}
	\email{de@msri.org}

	\address{Antonino Ficarra, Department of mathematics and computer sciences, physics and earth sciences, University of Messina, Viale Ferdinando Stagno d'Alcontres 31, 98166 Messina, Italy}
	\email{antficarra@unime.it}
	
	\address{J\"urgen Herzog, Fakult\"at f\"ur Mathematik, Universit\"at Duisburg-Essen, 45117 Essen, Germany} \email{juergen.herzog@uni-essen.de}
	
	\address{Somayeh Moradi, Department of Mathematics, Faculty of Science, Ilam University, P.O.Box 69315-516, Ilam, Iran}
	\email{so.moradi@ilam.ac.ir}
	
	\thanks{The fourth author is supported by the Alexander von Humboldt Foundation.
	}
	
	\subjclass[2020]{Primary 13C05; Secondary 05E40.}
	
	\keywords{Fitting ideal, Hilbert-Burch Theorem, canonical module}
	
	\maketitle
	
\begin{abstract}
For an ideal $I$ in a Noetherian ring $R$, the Fitting ideals $\Fitt_j(I)$ are studied. We discuss the question of when $\Fitt_j(I)=I$ or $\sqrt{\Fitt_j(I)}=\sqrt{I}$ for some $j$. A classical case is the Hilbert-Burch theorem when $j=1$ and $I$ is a perfect ideal of grade $2$ in a local ring. \end{abstract}

\section*{Introduction}
Fitting ideals are defined for finitely generated modules over a Noetherian ring. They convey  some information about the complexity of a module, as we will describe below.  Fitting ideals  have many applications in commutative algebra as well as in  number theory and arithmetic. In 1936, H. Fitting introduced these module invariants in \cite{Fitting36}.

Fitting ideals are defined as follows: for a  finitely generated $R$-module $M$ one chooses a  finite free presentation $G\stackrel{\varphi}{\To} F\to M\to 0$. Choosing bases of $F$ and $G$,  the $R$-module homomorphism $\varphi$  can be described by a matrix $A$. If $m$ is the rank of $F$, then  the $j$th Fitting ideal of $M$ is defined to be the ideal $\Fitt_j(M)=I_{m-j}(A)$, where for an integer $t$, $I_t(A)$ denotes the ideal of $t$-minors of $A$. Here  we use the convention that $I_t(A)=R$, for $t\leq 0$.  Fitting showed that  the ideals $\Fitt_j(M)$ are  invariants of $M$, that is,  they only depend on the module $M$ and do not depend  on the free presentation of $M$ nor on the choice of  the bases of the corresponding free modules. Obviously one has $\Fitt_0(M)\subseteq \Fitt_1(M)\subseteq\cdots \subseteq  \Fitt_m(M)=R$. Fitting ideals commute with the change of base rings, and in particular with localization.  In  other words, if $R\to S$ is a ring homomorphism, then $\Fitt_j(M\tensor_RS)=\Fitt_j(M) S$. An important property of  Fitting ideals is the following: the closed subset of $R$ defined by $\Fitt_j(M)$ is the set of prime ideals $P\in\Spec(R)$ such that $M_P$ cannot be generated by $j$ elements.    For details and the proofs of these statements we refer the reader  to the book of Eisenbud \cite[Section 20.2]{Ei}.

In this paper we are interested in the Fitting ideals $\Fitt_j(M)$, when $M$ is an ideal. The reason  for this arises from the Hilbert-Burch theorem. Let $R$ be a local ring,  and let $I\subset R$ be a perfect ideal of grade $2$. Then  the Hilbert-Burch theorem implies that $\Fitt_1(I)=I$. A similar statement holds for graded ideals in a graded $K$-algebra, where $K$ is a field. Considering this theorem  one may ask more generally for which ideals  $I$ we have $\Fitt_j(I)=I$ for some $j$. Dealing with this problem will be one of  our main concerns in this paper. 

In the first section of the paper we study the  relationship of an ideal to its Fitting ideals and prove in Theorem~\ref{contained} that for an ideal $I$ in a Noetherian ring which is generated by $m$ elements,  $I^{m-j}\subseteq \Fitt_j(I)$ for all  $1\le j\le m$. Equality holds,  if $I$ is generated by a regular sequence.  Moreover,  if $\grade I=j+1$, then  $\Fitt_{j}(I)\subseteq I$. As a result, in Corollary~\ref{Cor:RadFittGrade}  we obtain that  
$\sqrt{\Fitt_i(I)}=\sqrt{I}$  for  $i=1,\ldots,  \grade I-1$. Later in the paper, with a different argument, we show in Theorem~\ref{Thm:RadicalFitt} that  this statement remains  true if one replaces $\grade I$ by $\height I$, which in general is bigger than $\grade I$. Also from  Theorem~\ref{contained} we  obtain  in Corollary~\ref{equalm} a first simple example of an ideal $I$ for which $\Fitt_j(I)=I$ for some $j$. This is indeed the case when $(R,\mm)$ is local, $I=\mm$  and $j=\mu(\mm)-1$. Proposition~\ref{Chocklate} describes another case  under which a Fitting ideal of $I$ coincides with $I$. 
This case is for example given, when $R$ is a regular ring and $\mm$ is a maximal ideal of $R$ of height at least two.

The trace of a module $M$ is the ideal $\tr(M)=\sum_\varphi\varphi(M)$, where the sum is taken over all $\varphi\in\Hom_R(M,R)$. If $R$ is a domain and $M=I$ is an ideal,  then $\tr(I)=I^{-1}I$, see  \cite{HHS}. This allows us to compare $\tr(I)$ with $\Fitt_j(I)$. As a result we obtain in Proposition~\ref{twogen} that   $\tr(I)^{m-j}\subseteq \Fitt_j(I)$ for all $j$,  if $I$ is generated   by $m$ elements, and that   $\Fitt_1(I)=\tr(I)$,  if $I$  is generated by $2$ elements. 

In Section~\ref{two} we consider the trace of the canonical module $\omega_R$, and observe in Corollary~\ref{radomega} that the radicals of the ideals $\Fitt_1(\omega_R)$ and $\tr(\omega_R)$ coincide. If  $R$ is a local Cohen-Macaulay ring, which is generically a complete intersection, then  $\omega_R$ can be identified with an ideal of $R$, and one  may ask when it happens that $\Fitt_1(\omega_R)=\omega_R$. Trivially, this equality holds if $R$ is Gorenstein, and we  expect that equality holds if and only if $R$ is Gorenstein. So far we have not found any counterexample to this expectation, but also could not prove it yet. However, we show in Proposition~\ref{true} that if $\Fitt_{1}(\omega_R)\iso \omega_R$ and the Cohen-Macaulay type of $R$ is at most two, then $R$ is Gorenstein. 

Section~\ref{three} focusses on one of our main questions, namely, when  $\Fitt_1(I)=I$.   
In Theorem~\ref{david} a certain converse of the Hilbert-Burch theorem is presented. Indeed, it is shown that   if $R$ is  a local ring and  $I\subset R$ is  an ideal   with $\grade I\geq 2$ and $\Fitt_1(I)=I$, then $I$ is a perfect ideal of grade $2$.


Suppose that $I$ is an ideal with  $\grade I\geq j>2$. For such ideals one may presume, like in the case $j=2$, that   $I$ is a perfect ideal of grade  $j$, if $\Fitt_{j-1}(I)=I$. But it turns out that this is not the case. Indeed, in Theorem~\ref{noteasy} it is shown that if $S$ is a polynomial ring over a field, and  $I\subset S$ is a squarefree monomial ideal with $\grade I\geq j\ge 2$, then following conditions are equivalent:   (i) $\Fitt_{j-1}(I)=I$,  (ii)  $\Fitt_{j-1}(I)$ is squarefree, and (iii) If $j=2$, then $I$ is a perfect ideal with $\grade I=2$, and if $j>2$, then $I$ is generated by a regular sequence of length $j$. 	For the proof of this result combinatorial arguments are used.  But we expect that the equivalence of (i) and (iii)  is true for any ideal in $I$  in a local ring with  $\grade I\geq j\ge 2$. What we only can say in larger generality is phrased in Proposition~\ref{mirzaghasemi} which says that 
if    $I\subset R$  is a radical  ideal  of grade $\ge j\ge 2$ with  $\Fitt_{j-1}(I)=I$,   then $I$ is an  unmixed   ideal of grade $j$  and $R_P$ is regular for all $P\in \Ass I$.

For an ideal $I$ in a one-dimensional local ring, it may very well happen that $\Fitt_1(I)=I$. 
In Corollary~\ref{Brille} we show that if $(R,\mm)$ is a one-dimensional local domain  with infinite residue class field and multiplicity $e(R)=2$, then $\Fitt_1(I)=I$ if and only if $\tr(I)=I$. In Examples~\ref{demo}(a), this result is applied to numerical semigroup rings of multiplicity $2$, where the monomial ideals with $\Fitt_1(I)=I$ are classified.  In part (b) an  example of a monomial ideal $I$ in a numerical semigroup ring  of multiplicity 4 is given  which also satisfies $\Fitt_1(I)=I$, and which is not generated by $2$ elements. In view of this one may  ask  whether each one-dimensional local Cohen-Macaulay ring admits a proper ideal $I$ with $\Fitt_1(I)=I$.

In the last section of this paper we compare the radical of $\Fitt_j(I)$ with the radical of  $I$, when $I$ is an ideal in a Noetherian ring. As mentioned  before, these two radicals coincide  for $j=1,\ldots, \height I-1$. As a consequence we observe in Corollary~\ref{Cor:FittRadical} that in the same range for $j$, the $j$th Fitting ideal of $I$ and that of $\sqrt{I}$ coincide. 
Finally, in Theorem~\ref{edgeideal} we succeed to  compute $\sqrt{\Fitt_j(I)}$ for all $j$, when $I=I(G)$ is the edge ideal of a finite simple graph $G$.  The result shows that it in general it may be hard to find a nice description of $\sqrt{\Fitt_j(I)}$  for $j\geq \height I$.

\section{The relationship of an ideal to its Fitting ideals}

\medskip
In this section we compare the Fitting ideals of an ideal $I$ with powers of $I$, the radical and the trace of $I$. As the first result we have 

\begin{Theorem} 
	\label{contained}
	Let $R$ be a Noetherian ring and $I\subset R$ be an ideal generated by $m$ elements. Then the following hold: 
	
	\textup{(a)} $I^{m-j}\subseteq \Fitt_j(I)$ for all  $1\le j\le m$. Equality holds, if $I$ is generated by a regular sequence, or $R$ is local with maximal ideal $\mm$ and $I=\mm$.
	
	\textup{(b)} If $\grade I=j+1$, then $\Fitt_{j}(I)\subseteq I$.
\end{Theorem}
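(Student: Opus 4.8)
The plan is to prove (a) and (b) in turn, with (a) essentially a computation with minors of a presentation matrix and (b) a local-cohomology / grade argument.

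For part (a), I would start with the concrete presentation of $I$. Write $I=(f_1,\dots,f_m)$ and let $\varphi\colon R^n\to R^m$ be a presentation of $I$, so that $R^n\xrightarrow{\varphi}R^m\xrightarrow{(f_1,\dots,f_m)}I\to 0$ is exact. Among the relations we always have the $\binom{m}{2}$ Koszul relations $f_ie_j-f_je_i$. So the presentation matrix $A$ can be taken to include the columns coming from these Koszul syzygies; after reindexing, $A$ has a block of the form built from the $f_i$'s. The containment $I^{m-j}\subseteq \Fitt_j(I)=I_{m-j}(A)$ then follows by exhibiting, for each monomial $f_{i_1}\cdots f_{i_{m-j}}$, an $(m-j)$-minor of the Koszul block equal (up to sign) to that monomial: one selects the appropriate $m-j$ rows and the $m-j$ Koszul columns, getting an upper- (or lower-) triangular submatrix with the $f_{i_k}$ on the diagonal. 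For the equality statement when $I=(f_1,\dots,f_m)$ is a regular sequence, one uses that the Koszul complex is then the minimal free resolution, so $\varphi$ \emph{is} (up to iso) the second Koszul map, and a direct determinantal computation (or a localization-and-count-of-generators argument, using the characterization of the variety of $\Fitt_j$) shows $I_{m-j}(A)=I^{m-j}$. For $I=\mm$ in a local ring, one reduces to counting: $\Fitt_j(\mm)$ defines the locus where $\mm$ needs more than $j$ generators; since $\mu(\mm)=m$, this is just $\{\mm\}$ if $j<m$, giving $\sqrt{\Fitt_j(\mm)}=\mm$, and a closer look at the Koszul presentation of $\mm$ pins down the ideal on the nose.

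For part (b), assume $\grade I=j+1$ and let $\varphi\colon F\to R^m$, $F=R^n$, present $I$ with matrix $A$, so $\Fitt_j(I)=I_{m-j}(A)$. The point is that every $(m-j)$-minor of $A$ lies in $I$. I would argue by contradiction: suppose some $(m-j)$-minor $\delta$ of $A$ is not in $I$. Localize: there is a prime $P\supseteq I$ with $\delta\notin P$ (take $P$ minimal over $I$, or a suitable associated prime of $R/I$), and after localizing at $P$ the image of $\delta$ is a unit in $R_P$. But an $(m-j)$-minor of the presentation matrix being a unit means that, over $R_P$, the cokernel $I_P$ is generated by $j$ elements, i.e. $\mu(I_P)\le j$. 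Now $\grade I_P\ge \grade I=j+1$ forces $I_P$ to contain a regular sequence of length $j+1$, whence $\mu(I_P)\ge \grade I_P\ge j+1$, a contradiction. Hence every such minor lies in $I$, i.e. $\Fitt_j(I)\subseteq I$.

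The main obstacle I anticipate is making the equality half of (a) for a regular sequence fully rigorous at the level of ideals (not just radicals): one must identify the presentation map precisely with the second differential of the Koszul complex and then show $I_{m-j}$ of that specific matrix equals $I^{m-j}$ exactly, which requires either a clean induction on $m$ or invoking a known identity for minors of the Koszul matrix. The containment $I^{m-j}\subseteq\Fitt_j(I)$ and part (b) are comparatively routine once the Koszul relations are written down and the grade inequality $\mu(I_P)\ge\grade I_P$ is invoked. I would also need to double-check the boundary conventions ($I_t(A)=R$ for $t\le 0$) so that the statements read correctly when $j=m$ in (a) and when $j+1>\mu(I)$ cannot occur in (b).
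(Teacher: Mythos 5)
Your argument for the containment in (a) — realizing each monomial $f_{i_1}\cdots f_{i_{m-j}}$ as an $(m-j)$-minor of the block of Koszul columns of a relation matrix — is essentially the paper's proof (the paper phrases it as extracting the coefficient of a basis element of $K_{m-j}$ from a product of $m-j$ Koszul one-cycles). Your treatment of the regular-sequence case is also the right idea, and it is easier than you fear: since $R$ is merely Noetherian, the relevant fact is not minimality of a resolution but simply that $H_1(\fb;R)=0$, so the Koszul relations generate \emph{all} syzygies of $\fb$; hence a presentation matrix may be taken to be the second Koszul differential, all of whose entries lie in $I$, so every $(m-j)$-minor lies in $I^{m-j}$ — no induction or special minor identity is needed. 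For $I=\mm$, however, your sketch does not close: the support count only gives $\sqrt{\Fitt_j(\mm)}=\mm$, i.e. $\Fitt_j(\mm)\subseteq\mm$, and ``the Koszul presentation of $\mm$'' is not a presentation of $\mm$ unless $\mm$ is generated by a regular sequence. The intended one-line argument is different: take a \emph{minimal} presentation of $\mm$; by minimality every entry of the presentation matrix lies in $\mm$, so every $(m-j)$-minor lies in $\mm^{m-j}$, which together with the containment from (a) gives equality.

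The genuine gap is in (b). From $\delta\notin I$ you cannot produce a prime $P\supseteq I$ with $\delta\notin P$: such a prime exists if and only if $\delta\notin\sqrt{I}$. So your localization argument (a unit $(m-j)$-minor in $R_P$ forces $\mu(I_P)\le j$, contradicting $\mu(I_P)\ge\grade I_P\ge\grade I=j+1$) proves only $\Fitt_j(I)\subseteq\sqrt{I}$; this is the radical statement, essentially Theorem~\ref{Thm:RadicalFitt} later in the paper, and it fails to give (b) exactly when $\Fitt_j(I)$ meets $\sqrt{I}\setminus I$, which is the interesting case (for radical ideals the two statements coincide, but (b) is stated, and later used, for arbitrary ideals). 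The unit-minor criterion only detects which primes contain $\Fitt_j(I)$, hence can never see more than the radical. The paper's proof of (b) is homological: $\Fitt_j(I)$ is generated by the coefficients of elements of $Z_1^{m-j}\subseteq Z_{m-j}$ in the Koszul complex, and the depth sensitivity of Koszul homology gives $H_{m-j}(\fb;R)=0$ because $\grade I=j+1$; hence every element of $Z_1^{m-j}$ is a boundary, and since the Koszul differential has all entries in $I$, all its coefficients lie in $I$, so $\Fitt_j(I)\subseteq I$. Some input of this kind, beyond localization at primes, is required to land in $I$ rather than in $\sqrt{I}$.
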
  





\begin{proof}
	(a) By definition, $\Fitt_{j}(I)=I_{m-j}(A)$,  where  $A$ is a relation matrix of $I$. Let $\fb=f_1,\ldots,f_m$ be system of generators of $I$. 
	Consider the Koszul complex $K_\lpnt= (K_\lpnt(\fb;R),\partial_\lpnt)$ attached 
	to the sequence $\fb$. The complex  $K_\lpnt$  is a differential graded  algebra, whose algebra structure is  the exterior algebra of $K_1$. Let  $Z_1=\Ker \partial_1$  be the $R$-module  of  $1$-cycles of the Koszul complex. Then $Z_1^{m-j}\subset K_{m-j}$. After fixing a basis of $K_{m-j}$ each element of $Z_1^{m-j}$ can be uniquely written as a linear combination of the elements of this  basis. Then $I_{m-j}(A)$ is generated by the coefficients of this linear combinations as $z$ runs through a system of generators of $Z_1^{m-j}$.

	For $K_1$ we choose the basis $e_1,\ldots,e_m$ with $\partial_1(e_i)=f_i$ for $i=1,\ldots, m$. For any subset $A\subseteq [m]$, $A=\{a_1<a_2<\cdots < a_{\ell}\}$
	we set  $e_A=e_{a_1}\wedge  \cdots \wedge e_{a_{\ell}}$. Then the elements $e_A$ with $|A|={\ell}$ form a basis of $K_{\ell}$

	Let $1\leq j<m$, and let $f=f_{i_1}f_{i_2}\cdots f_{i_{m-j}}$  with  $1\leq i_1\leq i_2\leq \cdots \leq i_{m-j}<m$ be a generator of $I^{m-j}$. By induction on $m-j$  one  shows that there exist integers $1\leq k_1<\cdots < k_{m-j}\leq m$ such that $k_\ell\neq i_\ell$ for $\ell=1,\dots, {m-j}$. Let $z=z_1\wedge\cdots \wedge z_{m-j}$,  where $z_\ell=f_{i_\ell} e_{k_\ell}-f_{k_\ell }e_{i_\ell}$  is an element of $Z_1$. Then $z\in Z_1^{m-j}$, and the coefficient of $e_{k_1}\wedge\cdots\wedge e_{k_{m-j}}$ in $z$ is equal to $\pm f$. This proves that $I^{m-j}\subseteq  I_{m-j}(A)=\Fitt_j(I)$. 
	
Now, suppose that $f_1,\ldots,f_m$ is a regular sequence. Then $Z_1$ is generated by the $1$-boundaries $z_{i,j}=f_{i} e_j-f_{j }e_{i}$ with $1\leq i<j\leq m$. This implies that the coefficients of $Z_1^{m-j}$ belong to $I^{m-j}$. 
	
	
	Finally, assume that $R$ is local with maximal ideal $\mm$ and $I=\mm$. By the previous part $\mm^{m-j}\subseteq\Fitt_j(\mm)$ for all $1\le j\le m$. On the other hand, any nonzero entry of a  minimal  presentation matrix $A$ of $\mm$ belongs to $\mm$. Hence $\Fitt_j(\mm)=I_{m-j}(A)\subseteq\mm^{m-j}$ for all $1\le j\le m$, and this completes the proof.
	
	(b) Let ${\bf f}=f_1,\dots,f_m$ be a system of generators of $I$, and let as before $Z_1$ be the module of $1$-cycles of the Koszul complex $K_\lpnt(\fb;R)$. As observed in (a), the coefficients of the  elements   $z\in Z_1^{m-j}$ ( with respect the canonical basis of $K_{m-j}$)  generate $\Fitt_{j}(I)$. Since we assume that $\grade I=j+1$, it follows that  $H_{m-j}({\bf f};R)=0$, see \cite[Theorem 1.6.17(b)]{BH}. Therefore, any  $z\in Z_1^{m-j}$ is a boundary, since $Z_1^{m-j}\subseteq Z_{m-j}=B_{m-j}$, and so all coefficients of  $z$ belong to $I$. This shows that $\Fitt_{j}(I)\subseteq I$.
\end{proof}

\begin{Remark}
With  the assumptions and notation given in Theorem~\ref{contained}, part (b) of its proof shows that if  $Z_1^{m-j}=Z_{m-j}$ for $j=\grade I-1$, then $\Fitt_{j}(I)=I$.
\end{Remark}

\begin{Corollary}\label{Cor:RadFittGrade}
	Let $R$ be a Noetherian ring, and $I\subset R$ be an ideal with $\grade I=j$. Then
	$$
	\sqrt{\Fitt_i(I)}=\sqrt{I} \quad \text{for}\quad i=1,\ldots,  j-1.
	$$
\end{Corollary}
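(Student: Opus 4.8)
The plan is to trap $\Fitt_i(I)$ between a positive power of $I$ and $I$ itself, and then pass to radicals. Write $m=\mu(I)$ for the minimal number of generators of $I$. The first thing I would record is that $m\ge\height I\ge\grade I=j$: the inequality $\grade\le\height$ holds for every ideal in a Noetherian ring, and $\height I\le\mu(I)$ is Krull's height theorem. Since $1\le i\le j-1$, this gives $m-i\ge 1$, so that $I^{m-i}$ is a genuine positive power of $I$ and, in particular, $\sqrt{I^{m-i}}=\sqrt I$.

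Next come the two containments, both read off directly from Theorem~\ref{contained}. Part (a) applies because $1\le i<j\le m$, and yields $I^{m-i}\subseteq\Fitt_i(I)$. For the opposite direction I would invoke part (b) at the index $j-1$: here $\grade I=j=(j-1)+1$, so the hypothesis of (b) is met and $\Fitt_{j-1}(I)\subseteq I$. Because Fitting ideals are nondecreasing in the index, $\Fitt_i(I)\subseteq\Fitt_{j-1}(I)$ whenever $i\le j-1$, hence $\Fitt_i(I)\subseteq I$ for all $i=1,\dots,j-1$ at once.

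Combining the two, $I^{m-i}\subseteq\Fitt_i(I)\subseteq I$, and taking radicals gives $\sqrt I=\sqrt{I^{m-i}}\subseteq\sqrt{\Fitt_i(I)}\subseteq\sqrt I$, which forces equality throughout and is exactly the assertion.

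I do not expect a genuine obstacle here: once Theorem~\ref{contained} is available the whole argument is a short squeeze. The only point that needs a moment's attention is the bookkeeping inequality $m-i\ge 1$, equivalently $\mu(I)\ge\grade I$, since this is precisely what guarantees that replacing $I$ by the power $I^{m-i}$ does not change its radical; as noted, it is immediate from Krull's height theorem together with $\grade\le\height$. (One could avoid even the monotonicity step by localizing at an arbitrary prime $P\supseteq I$ and applying Theorem~\ref{contained}(b) inside $R_P$, using that grade does not drop under localization so that $\grade(IR_P,R_P)\ge j\ge i+1$; but this detour is unnecessary.)
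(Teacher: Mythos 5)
Your argument is correct and is essentially the paper's own proof: both squeeze $\Fitt_i(I)$ between a power of $I$ and $I$ using Theorem~\ref{contained}(a), Theorem~\ref{contained}(b) at index $j-1$ together with the monotonicity of Fitting ideals, and then pass to radicals. Your extra remark that $m-i\ge 1$ (via $\grade I\le\height I\le\mu(I)$) is a harmless added check that the paper leaves implicit.
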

\begin{proof}
	By the previous theorem, we have $I^{m-1}\subseteq\Fitt_1(I)\subseteq \cdots \subseteq \Fitt_{j-1}(I)\subseteq  I$, where $m=\mu(I)$. Taking the radicals, the statement follows.
\end{proof}

In Theorem \ref{Thm:RadicalFitt} it is shown that  for the equality in Corollary \ref{Cor:RadFittGrade}, $\grade I$ can be replaced by $\height I$. 

\medskip
On the other hand, when $\grade I=j$, we cannot expect that $\sqrt{\Fitt_j(I)}= \sqrt{I}$. Indeed, consider the ideal $I=(x_1x_2,x_1x_3)\subset S=K[x_1,x_2,x_3]$ and $K$ a field. Then $\grade I=1$, $\sqrt{\Fitt_1(I)}=(x_2,x_3)$ and $\sqrt{I}=I$. More examples follow later.

\medskip  
 
When $I$ is the  maximal ideal in a local ring,  Theorem~\ref{contained} gives the answer to our general question of when  $\Fitt_j(I)=I$.

\begin{Corollary}
\label{equalm}
Let $R$ be a local ring with maximal ideal $\mm$. Then $\Fitt_j(\mm)=\mm$ if and only if $j=\mu(\mm)-1$.
\end{Corollary}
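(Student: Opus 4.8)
The plan is to read off both directions from Theorem~\ref{contained}(a) together with Nakayama's lemma, after recording one sharpening contained in that theorem's proof. Write $m=\mu(\mm)$. From the proof of Theorem~\ref{contained}(a) we know that every entry of a minimal presentation matrix $A$ of $\mm$ lies in $\mm$, so that $\Fitt_j(\mm)=I_{m-j}(A)\subseteq\mm^{\,m-j}$ for all $j\ge 0$; combined with the inclusion $\mm^{\,m-j}\subseteq\Fitt_j(\mm)$ from part (a), this gives the exact formula $\Fitt_j(\mm)=\mm^{\,m-j}$ for $1\le j\le m$, which is the engine of the argument.

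For the implication ``$j=m-1\Rightarrow\Fitt_j(\mm)=\mm$'' I would substitute $j=m-1$ into this formula to get $\Fitt_{m-1}(\mm)=\mm^{1}=\mm$; this covers every case with $m\ge 2$, and the degenerate case $m=1$ (where $\mm=(t)$ is principal and $\Fitt_0(\mm)$ is the annihilator of $t$) would be dealt with directly. For the converse, suppose $\Fitt_j(\mm)=\mm$. First I would exclude $j\ge m$: since then $m-j\le 0$, the convention $I_{t}(A)=R$ for $t\le 0$ gives $\Fitt_j(\mm)=I_{m-j}(A)=R\ne\mm$, a contradiction; hence $j\le m-1$. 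Next I would exclude $j\le m-2$: in that range $m-j\ge 2$, so $\mm=\Fitt_j(\mm)\subseteq\mm^{\,m-j}\subseteq\mm^{2}$, forcing $\mm=\mm^{2}$; Nakayama's lemma then yields $\mm=0$, contradicting $m=\mu(\mm)\ge 2$ (note that $0\le j\le m-2$ forces $m\ge 2$). This leaves precisely $j=m-1=\mu(\mm)-1$.

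I do not expect a real obstacle: the substance is already in Theorem~\ref{contained}(a), and what remains is the elementary fact that $\mm^{k}=\mm$ forces $k=1$ (immediate from Nakayama, as $\mm\ne 0$), together with the bookkeeping of the two boundary ranges $j\ge m$ and $j\le m-2$. The one point calling for a little care is the principal case $m=\mu(\mm)=1$ of the forward direction, where $\Fitt_0(\mm)=\Ann(t)$ has to be compared with $(t)$ on its own.
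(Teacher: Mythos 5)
Your main line of argument is exactly the paper's: the corollary is stated without proof because it is meant to follow at once from Theorem~\ref{contained}(a), which (as you note, via the observation that a minimal presentation matrix of $\mm$ has all entries in $\mm$) yields $\Fitt_j(\mm)=\mm^{m-j}$ for $1\le j\le m$, after which Nakayama forces $m-j=1$; your bookkeeping for the ranges $j\ge m$ (where $\Fitt_j(\mm)=R$) and $0\le j\le m-2$ (where $\mm=\mm^2$ would follow) is correct and in fact more explicit than anything the paper records.

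The one genuine problem is the case you defer, $m=\mu(\mm)=1$, which you promise to ``deal with directly'' but never do --- and it cannot be dealt with as stated. If $\mm=(t)$ is principal, then $\Fitt_0(\mm)=\Ann(t)$, and this equals $(t)$ only in very special rings such as $K[x]/(x^2)$: for a discrete valuation ring one has $\Ann(t)=0\neq\mm$, and for $K[x]/(x^3)$ one gets $\Ann(x)=(x^2)\neq(x)$. So the implication ``$j=\mu(\mm)-1$ implies $\Fitt_j(\mm)=\mm$'' is false when $\mu(\mm)=1$, and the corollary has to be read with $\mu(\mm)\ge 2$ (equivalently, with $j\ge 1$) implicitly assumed --- a restriction the paper itself glosses over, since Theorem~\ref{contained}(a) only covers $1\le j\le m$ and says nothing about $\Fitt_0$. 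With that hypothesis added your proof is complete and coincides with the paper's intended argument; but you should either state the extra hypothesis or exhibit the counterexample, rather than leave the principal case open as ``a little care.''
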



The next result describes another situation for which a Fitting ideal of an ideal $I$ coincides with $I$.

\begin{Proposition}\label{Chocklate}
Let $R$ be a Noetherian Cohen-Macaulay ring, and let $I\subset R$ be an ideal with $\mu(IR_P)=\grade I=j\geq 2$ for any $P\in V(I)$. Then $\Fitt_{j-1}(I)=I$.  
\end{Proposition}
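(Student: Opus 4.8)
The plan is to verify the equality $\Fitt_{j-1}(I)=I$ locally, exploiting that Fitting ideals commute with localization: for every $P\in\Spec R$ we have $\Fitt_{j-1}(I)_P=\Fitt_{j-1}(IR_P)$, so it suffices to show $\Fitt_{j-1}(IR_P)=IR_P$ for all $P$. When $P\notin V(I)$ this is trivial, since then $IR_P=R_P$ is a cyclic $R_P$-module, so $\Fitt_1(IR_P)=R_P$, and because the Fitting ideals form an ascending chain and $j\ge 2$ we obtain $\Fitt_{j-1}(IR_P)=R_P=IR_P$.

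So fix $P\in V(I)$. The main step is to show that $IR_P$ is generated by a regular sequence of length $j$. Since $R$ is Cohen--Macaulay, so is $R_P$, and $\grade I=\height I=j$. The minimal primes of $IR_P$ are exactly the $QR_P$ with $Q\in\Min(I)$ and $Q\subseteq P$, and $\height_{R_P}(QR_P)=\height_R(Q)$; taking the minimum over this (nonempty) subset of $\Min(I)$ gives $\height(IR_P)\ge\height I=j$. On the other hand $IR_P$ is generated by $j$ elements by hypothesis, so Krull's height theorem yields $\height(IR_P)\le j$, and hence $\height(IR_P)=j$. In the Cohen--Macaulay local ring $R_P$ an ideal of height $j$ generated by $j$ elements is generated by a regular sequence (see e.g. \cite[Theorem 2.1.2]{BH}). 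Applying Theorem~\ref{contained}(a) over $R_P$ with $m=j$, the inclusion $I^{m-i}\subseteq\Fitt_i$ becomes an equality; taking $i=j-1$ gives $\Fitt_{j-1}(IR_P)=(IR_P)^{\,j-(j-1)}=IR_P$. Combining both cases, $\Fitt_{j-1}(I)_P=I_P$ for every prime $P$, so $\Fitt_{j-1}(I)=I$.

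There is no serious obstacle in this argument; it is a localization reduction to the complete intersection case already handled in Theorem~\ref{contained}(a). The one point that requires care is the chain of inequalities pinning down $\height(IR_P)=j$: the lower bound uses both that $R$ is Cohen--Macaulay and the description of the minimal primes of a localization, while the upper bound is Krull's height theorem; it is precisely this identification that converts the hypothesis $\mu(IR_P)=\grade I=j$ into the statement that $IR_P$ is a complete intersection. (Alternatively, the inclusion $\Fitt_{j-1}(I)\subseteq I$ is immediate from Theorem~\ref{contained}(b), since $\grade I=(j-1)+1$, so only $I\subseteq\Fitt_{j-1}(I)$ really needs the local analysis.)
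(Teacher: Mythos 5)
Your proof is correct, but it follows a genuinely different route than the paper's. The paper first uses the hypothesis $\mu(IR_P)=j$ on $V(I)$ together with \cite[Proposition 20.6]{Ei} to conclude $\Fitt_j(I)=R$, which by \cite[Exercises 20.9, 20.10]{Ei} identifies $\Fitt_{j-1}(I)$ with $\Ann\wedge^j I$; it then gets the global inclusion $\Fitt_{j-1}(I)\subseteq I$ from Theorem~\ref{contained}(b) and proves the reverse containment by checking, for $P\in V(I)$, that $IR_P$ is a complete intersection (via the chain $\grade I\le\grade IR_P\le\height IR_P\le\mu(IR_P)=\grade I$), so that $\Ann\wedge^j(IR_P)=IR_P$. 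You instead verify the equality $\Fitt_{j-1}(I)_P=I_P$ at every prime directly: the case $P\notin V(I)$ is trivial, and for $P\in V(I)$ you establish the same key local fact --- $IR_P$ is generated by a regular sequence of length $j$ (your derivation via $\height I=\grade I$ in a CM ring, behavior of height under localization, and Krull's theorem is a sound variant of the paper's grade chain) --- and then invoke the equality case of Theorem~\ref{contained}(a) with $m=j$ to get $\Fitt_{j-1}(IR_P)=IR_P$. Your version is more elementary in that it bypasses the exterior-power/annihilator identification and the fact $\Fitt_j(I)=R$ entirely, resting only on Theorem~\ref{contained}(a), base change of Fitting ideals, and standard facts about Cohen--Macaulay local rings; what the paper's route buys is the intermediate structural statement $\Fitt_{j-1}(I)=\Ann\wedge^j I$ and a global containment obtained without a prime-by-prime check (though, as you note, the primes outside $V(I)$ cost nothing). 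One tiny stylistic remark: the Cohen--Macaulay hypothesis enters your lower bound only through the identification $\height I=\grade I=j$; the inequality $\height(IR_P)\ge\height I$ itself needs no such assumption.
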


\begin{proof}
The assumption $\mu(IR_P)=j$ together with \cite[Proposition 20.6]{Ei} implies that $\Fitt_{j}(I)=R$. Hence it follows that $\Fitt_{j-1}(I)=\Ann \wedge^jI$, see \cite[Exercises 20.9, 20.10]{Ei}. We set $J=\Ann \wedge^jI$. Then 
by Theorem~\ref{contained}, we have 
$J=\Fitt_{j-1}(I)\subseteq I$. We show that $J=I$. From the inequalities $$\grade I\leq \grade IR_P\leq\height IR_P\leq\mu(IR_P)=\grade I,$$
we obtain that $IR_P$ is a complete intersection for any $P\in V(I)$. Therefore, $\Ann \wedge^jIR_P=IR_P$. So we conclude that $JR_P=IR_P$ for any $P\in V(I)$. This together with the inclusion $J\subseteq I$ implies that $J=I$. 
\end{proof}

Using Proposition~\ref{Chocklate} and under the additional assumption that $R$ is a regular ring, Corollary~\ref{equalm} can be extended as follows.

\begin{Corollary}
Let $R$ be a regular ring (not necessarily local), and let $\mm$ be maximal ideal of $R$ such that $\dim R_{\mm}=d\geq 2$. Then
$\Fitt_{d-1}(\mm)=\mm$.
\end{Corollary}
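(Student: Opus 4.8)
The plan is to apply Proposition~\ref{Chocklate} to the ideal $I=\mm$ with $j=d$; the corollary is then exactly the conclusion $\Fitt_{d-1}(\mm)=\mm$ of that proposition, once its hypotheses have been checked. What has to be verified is that $R$ is a Noetherian Cohen--Macaulay ring, that $d\geq 2$, and that $\mu(\mm R_P)=\grade\mm=d$ for every $P\in V(\mm)$. The first requirement is automatic, since a regular ring is by definition Noetherian and is Cohen--Macaulay, and the bound $d\geq 2$ is part of the hypothesis.

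The one place where I would be slightly careful is the quantifier over $P\in V(\mm)$. Since $\mm$ is maximal we have $V(\mm)=\{\mm\}$, so the condition on $\mu$ only needs to be tested at $P=\mm$. There $R_\mm$ is a regular local ring of Krull dimension $d$, hence its maximal ideal $\mm R_\mm$ is minimally generated by a regular system of parameters of length $d$, so $\mu(\mm R_\mm)=d$. For the grade, I would use that $\grade\mm=\height\mm$, which holds because $R$ is Cohen--Macaulay (see \cite{BH}), together with $\height\mm=\dim R_\mm=d$. Thus $\mu(\mm R_\mm)=\grade\mm=d$, as required by Proposition~\ref{Chocklate}.

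With these verifications in hand, Proposition~\ref{Chocklate} applies with $j=d$ and yields $\Fitt_{d-1}(\mm)=\mm$. I do not expect a genuine obstacle here: the argument is a direct specialization of Proposition~\ref{Chocklate}, and the only subtlety is noticing that the pointwise hypothesis $\mu(IR_P)=\grade I$ collapses to a single prime when $I$ is a maximal ideal. (Alternatively, one could bypass Proposition~\ref{Chocklate} altogether by localizing at $\mm$ and invoking Corollary~\ref{equalm} for the regular local ring $R_\mm$, using that $\Fitt$ commutes with localization and that $\Fitt_{d-1}(\mm)_P=R_P$ for every prime $P\neq\mm$.)
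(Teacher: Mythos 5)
Your argument is correct and is exactly the route the paper intends: the corollary is stated as an immediate consequence of Proposition~\ref{Chocklate}, applied with $I=\mm$ and $j=d$, and your verification that $V(\mm)=\{\mm\}$, $\mu(\mm R_\mm)=d$ and $\grade\mm=\height\mm=d$ (since a regular ring is Cohen--Macaulay) is precisely the routine check being suppressed. Your alternative via Corollary~\ref{equalm} and localization is a fine remark but not needed.
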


 The next result shows that if $I$ is a $2$-generated ideal in a domain, then $\Fitt_1(I)$ is the trace of $I$. 
 
 
 \begin{Proposition}	
 	\label{twogen}
 	Let  $R$ be a domain, and let $I\subset R$ be an ideal.  Then  we have 
 	\begin{enumerate}
 		\item[ (a)] If $I$ is generated   by $m$ elements, then $\tr(I)^{m-j}\subseteq \Fitt_j(I)$ for all $j$.
 		\item[(b)]  If $I$  is generated by $2$ elements, then $\Fitt_1(I)=\tr(I)$. 
 	\end{enumerate}
 \end{Proposition}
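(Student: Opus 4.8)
The plan is to build on Theorem~\ref{contained} together with the identity $\tr(I)=I^{-1}I$, valid for a nonzero ideal $I$ of a domain (see \cite{HHS}), where $I^{-1}=\{x\in\operatorname{Frac}(R):xI\subseteq R\}$. Fix generators $\fb=f_1,\dots,f_m$ of $I$. Since $xf_i\in R$ whenever $x\in I^{-1}$, the $R$-module $\tr(I)$ is generated by the elements $xf_i$ with $x\in I^{-1}$ and $1\le i\le m$. The key remark for (a) is that for such an $x$ and for $i\ne k$ the element $x(f_ke_i-f_ie_k)=(xf_k)e_i-(xf_i)e_k$ has \emph{entries in $R$} and lies in $Z_1=\Ker\partial_1$ of the Koszul complex $K_\lpnt(\fb;R)$, since $\partial_1$ sends it to $(xf_k)f_i-(xf_i)f_k=0$. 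These are the natural ``$I^{-1}$-twists'' of the Koszul boundaries $f_ie_k-f_ke_i$ used in the proof of Theorem~\ref{contained}(a).

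With this in hand I would run the proof of Theorem~\ref{contained}(a) essentially word for word. Given a generator $g=\prod_{\ell=1}^{m-j}x_\ell f_{i_\ell}$ of $\tr(I)^{m-j}$ with $x_\ell\in I^{-1}$, reorder the factors so that $i_1\le\cdots\le i_{m-j}$, pick (exactly as there) integers $1\le k_1<\cdots<k_{m-j}\le m$ with $k_\ell\ne i_\ell$, and set $z=w_1\wedge\cdots\wedge w_{m-j}$ with $w_\ell=(x_\ell f_{i_\ell})e_{k_\ell}-(x_\ell f_{k_\ell})e_{i_\ell}\in Z_1$. Then $z\in Z_1^{m-j}\subseteq K_{m-j}$, so each of its coordinates with respect to the basis $\{e_A:|A|=m-j\}$ lies in $\Fitt_j(I)=I_{m-j}(A)$, and the coordinate of $e_{k_1}\wedge\cdots\wedge e_{k_{m-j}}$ is $\pm g$, whence $g\in\Fitt_j(I)$. (The cases $j=m$ and $I=0$ are trivial.) The one delicate point is the same one already handled in Theorem~\ref{contained}: that the chosen $k_\ell$ really do force this coordinate to be $\pm\prod_\ell(x_\ell f_{i_\ell})$ rather than a larger combination. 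Since that verification depends only on the index sets $\{i_\ell\}$, $\{k_\ell\}$, and the scalars $x_\ell$ merely ride along multiplying the $f_{i_\ell}$, nothing new has to be proved; so I expect (a) to cost essentially no more than Theorem~\ref{contained}(a).

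For (b), let $I=(f_1,f_2)\ne 0$; we may assume $f_1,f_2\ne 0$, since otherwise $I$ is principal and $\Fitt_1(I)=\tr(I)=R$. As recalled in the proof of Theorem~\ref{contained}, $\Fitt_1(I)=I_1(A)$ is the ideal generated by all entries of all relations $(c_1,c_2)\in R^2$ with $c_1f_1+c_2f_2=0$. For such a relation, $c_1f_1=-c_2f_2$ in $\operatorname{Frac}(R)$ gives $c_1=xf_2$, $c_2=-xf_1$ with $x:=c_1/f_2=-c_2/f_1$; since $xf_2=c_1\in R$ and $xf_1=-c_2\in R$ we get $x\in I^{-1}$, so $c_1,c_2\in xI\subseteq\tr(I)$. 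Conversely, for every $x\in I^{-1}$ the pair $(xf_2,-xf_1)$ is a relation of $f_1,f_2$ with entries $xf_1,xf_2$. Hence $\Fitt_1(I)$ is the ideal generated by $\{xf_i:x\in I^{-1},\,i=1,2\}=\sum_{x\in I^{-1}}xI=I^{-1}I=\tr(I)$. (The inclusion $\tr(I)\subseteq\Fitt_1(I)$ is in any case the case $m=2$, $j=1$ of (a).) The crux here is the domain hypothesis, which turns every $R$-homomorphism $I\to R$ into multiplication by an element of $I^{-1}$ and thereby puts relations of a two-generated ideal in bijection with $I^{-1}$; the rest is bookkeeping.
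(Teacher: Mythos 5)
Your argument is correct. Part (b) is essentially identical to the paper's proof: the paper also identifies a relation $g_1f_1+g_2f_2=0$ with the element $h=g_1/f_2\in I^{-1}$ and conversely, so there is nothing to compare there. For part (a) your route differs from the paper's in a way worth noting. The paper does not rerun the Koszul computation: it applies Theorem~\ref{contained}(a) to the ideal $hI\subseteq R$ for a fixed $h\in I^{-1}$ and uses the invariance $\Fitt_j(hI)=\Fitt_j(I)$ (valid since multiplication by $h\neq 0$ is an isomorphism $I\to hI$ over a domain), obtaining $(hI)^{m-j}=h^{m-j}I^{m-j}\subseteq\Fitt_j(I)$, and then passes to $\tr(I)^{m-j}$. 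That reduction is shorter, but as literally stated it only covers products in which a single $h$ occurs $m-j$ times, while $\tr(I)^{m-j}=(I^{-1}I)^{m-j}$ is generated by mixed products $x_1f_{i_1}\cdots x_{m-j}f_{i_{m-j}}$ with possibly distinct $x_\ell\in I^{-1}$; the passage from pure to mixed products is left implicit in the paper. Your version handles exactly these mixed generators, because you twist each cycle separately: $w_\ell=(x_\ell f_{i_\ell})e_{k_\ell}-(x_\ell f_{k_\ell})e_{i_\ell}$ lies in $Z_1$ with entries in $R$, and $w_1\wedge\cdots\wedge w_{m-j}$ is just $\bigl(\prod_\ell x_\ell\bigr)$ times the cycle used in Theorem~\ref{contained}(a), so the coefficient of $e_{k_1}\wedge\cdots\wedge e_{k_{m-j}}$ is $\pm\prod_\ell x_\ell f_{i_\ell}$ by the same combinatorial verification, and all coefficients of elements of $Z_1^{m-j}$ lie in $I_{m-j}(A)=\Fitt_j(I)$. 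So your approach costs a repetition of the Koszul construction but buys a fully explicit treatment of the generators of $\tr(I)^{m-j}$, whereas the paper's buys brevity by quoting Theorem~\ref{contained} for $hI$. (Like the paper, you should read ``for all $j$'' as $j\ge 1$; your dispatch of $j=m$ and $I=0$, and of a zero generator in (b), is fine.)
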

 
 \begin{proof}
 	(a) Let $h\in I^{-1}$. Then by Theorem~\ref{contained},  $(hI)^{m-j}\subseteq  \Fitt_j(hI)=\Fitt_j(I)$. So $(hf)^{m-j}\in \Fitt_j(I)$, for all $f\in I$ and $h\in I^{-1}$. This implies the desired inclusion.
 	
 	
 	(b) By (a), it is enough to show that  $\Fitt_1(I)\subseteq \tr(I)$. Let  $I=(f_1,f_2)$. Let $g_1f_1+g_2f_2=0$,  and let $h=g_1/f_2$. Then $hf_2=g_1$ and $hf_2f_1=g_1f_1=-g_2f_2$. Therefore, $hf_1=-g_2$. This implies that $h\in I^{-1}$. Conversely, if $h\in I^{-1}$, then $g_1f_1+g_2f_2=0$ with $g_1=hf_2$ and $g_2=-hf_1$.  Thus,  the desired conclusion follows. 
 \end{proof}

\section{Fitting ideals of the canonical module}
\label{two}
\medskip
Let $(R,\mm)$ be a Cohen-Macaulay local ring with  canonical module $\omega_R$.  The Cohen-Macaulay type of a Cohen-Macaulay module $M$ will be denoted by $r(M)$. We have 

\begin{Lemma}
\label{type}
Let $j\geq 1$ be  an integer. Then the set of prime ideals $P$ of $R$ for which $r(R_P)\geq j+1$ is a closed set in $\Spec(R)$. 
\end{Lemma}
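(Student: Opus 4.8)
The plan is to reduce the statement to the fundamental support property of Fitting ideals recalled in the Introduction, applied to the canonical module $\omega_R$. Concretely, I would show that the set in question is exactly $V(\Fitt_j(\omega_R))$, which is closed by construction.

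First I would recall two standard facts about canonical modules over a Cohen-Macaulay local ring, both contained in \cite[Section 3.3]{BH}. For such a ring $(R,\mm)$ with canonical module $\omega_R$, the Cohen-Macaulay type satisfies $r(R)=\mu(\omega_R)$, the minimal number of generators of $\omega_R$. Moreover, for any $P\in\Spec(R)$ the localization $R_P$ is again Cohen-Macaulay, it admits a canonical module, and one may take $\omega_{R_P}=(\omega_R)_P$. Combining these, for every $P\in\Spec(R)$ we get $r(R_P)=\mu\big((\omega_R)_P\big)=\mu(\omega_{R_P})$.

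Next I would invoke the characterization of Fitting ideals discussed in the Introduction (see \cite[Section 20.2, Proposition 20.6]{Ei}): for a finitely generated $R$-module $M$, the closed set $V(\Fitt_j(M))$ is precisely $\{P\in\Spec(R):\mu(M_P)\geq j+1\}$, i.e.\ the locus of primes $P$ for which $M_P$ cannot be generated by $j$ elements. Taking $M=\omega_R$ and using the identity from the previous paragraph yields
$$
\{P\in\Spec(R):r(R_P)\geq j+1\}=\{P\in\Spec(R):\mu((\omega_R)_P)\geq j+1\}=V(\Fitt_j(\omega_R)),
$$
which is a closed subset of $\Spec(R)$. This proves the lemma.

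The argument is short and I do not foresee a genuine obstacle; the only point requiring care is that the type $r(-)$ is a priori only defined for Cohen-Macaulay local rings, so one must first note that each $R_P$ is still Cohen-Macaulay and still carries a canonical module (namely $(\omega_R)_P$) before the identification $r(R_P)=\mu((\omega_R)_P)$ even makes sense. Once this bookkeeping is in place, the lemma is immediate from the support formula for Fitting ideals.
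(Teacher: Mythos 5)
Your argument is correct and is essentially the paper's own proof: the authors likewise identify the locus $\{P : r(R_P)\geq j+1\}$ with $V(\Fitt_j(\omega_R))$ via $r(R_P)=\mu(\omega_{R_P})=\mu((\omega_R)_P)$ and the support property of Fitting ideals. Your version merely spells out the bookkeeping (localization of the canonical module, $R_P$ Cohen--Macaulay) that the paper leaves implicit.
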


\begin{proof}
We note that $\mu(\omega_R) =r(R_P)$. Hence the assertion follows from the fact that $V(\Fitt_j(\omega_R))$ is the set  of prime ideals $P$ for which $(\omega_R)_P=\omega_{R_P}$ cannot be generated by $j$ elements. 
\end{proof}

\begin{Corollary}
\label{radomega}
Let $R$ be a Cohen-Macaulay local ring. Then $\sqrt{\Fitt_1(\omega_R)}=\sqrt{\tr(\omega_R)}.$
\end{Corollary}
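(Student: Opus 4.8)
The plan is to deduce the equality of radicals from the description of $\sqrt{\Fitt_1(\omega_R)}$ and $\sqrt{\tr(\omega_R)}$ as intersections of the prime ideals in their respective varieties, and to show these varieties coincide. For the left-hand side, Lemma~\ref{type} (with $j=1$) tells us that $V(\Fitt_1(\omega_R))$ is precisely the set of primes $P$ with $r(R_P)\geq 2$, i.e., the non-Gorenstein locus of $R$. So the first step is to prove the analogous statement for the trace: $V(\tr(\omega_R))$ is also the non-Gorenstein locus.

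To carry this out, I would use that $\tr(\omega_R)$ commutes with localization (since $\omega_R$ is finitely generated and $\omega_{R_P}\cong(\omega_R)_P$), so that $\tr(\omega_R)_P=\tr_{R_P}(\omega_{R_P})$ for every $P$. Then $P\in V(\tr(\omega_R))$ iff $\tr_{R_P}(\omega_{R_P})\neq R_P$, i.e., iff $\tr_{R_P}(\omega_{R_P})\subseteq PR_P$. The key classical fact to invoke here is that for a Cohen-Macaulay local ring $R$ with canonical module, $\tr(\omega_R)=R$ if and only if $R$ is Gorenstein — this is exactly the ``nearly Gorenstein'' circle of ideas (see Herzog--Hibi--Stamate), where $\tr(\omega_R)$ is the ``Gorenstein locus ideal''. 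Hence $V(\tr(\omega_R))$ equals the non-Gorenstein locus of $R$, matching $V(\Fitt_1(\omega_R))$.

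Once both varieties are identified with the same set $\{P\in\Spec(R): R_P \text{ is not Gorenstein}\}$, we conclude $V(\Fitt_1(\omega_R))=V(\tr(\omega_R))$, and therefore $\sqrt{\Fitt_1(\omega_R)}=\sqrt{\tr(\omega_R)}$, since in a Noetherian ring the radical of an ideal is the intersection of the primes in its variety. This finishes the argument.

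The main obstacle, such as it is, lies in step two: one must have available (or prove) that $\tr(\omega_R)_P=\tr_{R_P}(\omega_{R_P})$ and that $\tr(\omega_R)=R\iff R\text{ Gorenstein}$. The localization compatibility of the trace is standard for finitely generated modules over Noetherian rings; the Gorenstein criterion is well known but should be cited properly. If one wishes to avoid citing the ``nearly Gorenstein'' literature, an alternative is to argue more directly: $\tr(\omega_R)=\omega_R\cdot\Hom_R(\omega_R,R)$, and since $\Hom_R(\omega_R,R)\cong\omega_R^{-1}$ with $\omega_R\cdot\omega_R^{-1}=R$ exactly when $\omega_R$ is free of rank one (i.e., $R$ Gorenstein), the non-vanishing locus of $R/\tr(\omega_R)$ is the non-Gorenstein locus — but the cited route via Herzog--Hibi--Stamate is cleaner.
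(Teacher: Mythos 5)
Your proposal is correct and follows essentially the same route as the paper: identify $V(\Fitt_1(\omega_R))$ with the non-Gorenstein locus via the Fitting-ideal criterion (Lemma~\ref{type} with $j=1$), identify $V(\tr(\omega_R))$ with the same locus via the Herzog--Hibi--Stamate result, and conclude that the radicals agree. Your extra remarks on localization of the trace just make explicit what the paper leaves to the citation of \cite{HHS}.
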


\begin{proof}
For $P\in \Spec(R)$, $R_P$ is Gorenstein if and only if $\omega_{R_P}$ is generated by one element. Thus, $\Fitt_1(\omega_R)$ describes the non-Gorenstein locus of $R$. Since the non-Gorenstein locus of $R$ is also given by $\tr(\omega_R)$, as noted in \cite{HHS}, the statement follows.
\end{proof}

In the next two results we assume that $(R,\mm)$                  is a local Cohen--Macaulay ring which is generically Gorenstein. We furthermore assume that $R$ admits a canonical module $\omega_R$. By \cite[Proposition 3.3.18]{BH},  $\omega_R$ can be identified with a  Cohen-Macaulay ideal of  grade  $1$. 
      Assume that $\Fitt_j(\omega_R)\iso \omega_R$ for some $j$. Then  there exists a non-zerodivisor $t\in R$ such that $\Fitt_j(\omega_R)  =t\omega_R$.  Then $\Fitt_j(t\omega_R)=\Fitt_j(\omega_R)=t\omega_R$. Since the ideal $t\omega_R$  is also   a canonical ideal of $R$, the condition   $\Fitt_j(\omega_R)\iso \omega_R$ can always be replaced by 
   $\Fitt_j(\omega_R)= \omega_R$    for a suitable choice of  $\omega_R$.

 \begin{Corollary}
 \label{tromega} 
 Assume that $\Fitt_1(\omega_R)=\omega_R$. Then $R_P$ is Gorenstein if and only if $\omega_R\not\subseteq P$.  Moreover,   if $R$ is not Gorenstein, then  the non-Gorenstein locus of $R$ is of dimension $\dim R-1$.
 \end{Corollary}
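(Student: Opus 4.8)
The plan is to exploit the standing hypothesis $\Fitt_1(\omega_R)=\omega_R$ together with the description of the non-Gorenstein locus recalled in Corollary~\ref{radomega}. Recall that for a Cohen--Macaulay local ring with canonical module, the non-Gorenstein locus of $R$ is exactly $V(\Fitt_1(\omega_R))$, since $(\omega_R)_P=\omega_{R_P}$ is principal precisely when $R_P$ is Gorenstein (this is the content of the proof of Corollary~\ref{radomega}). Under the hypothesis $\Fitt_1(\omega_R)=\omega_R$, this locus is therefore $V(\omega_R)$. So the first statement, that $R_P$ is Gorenstein if and only if $\omega_R\not\subseteq P$, is immediate: $R_P$ is Gorenstein $\iff P\notin V(\Fitt_1(\omega_R))=V(\omega_R)$ $\iff \omega_R\not\subseteq P$.

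For the dimension statement, I would argue as follows. Since $R$ is generically Gorenstein, $\omega_R$ is identified with a Cohen--Macaulay ideal of $R$ of grade $1$ (by \cite[Proposition 3.3.18]{BH}), so every associated prime of $R/\omega_R$ has height $1$; being Cohen--Macaulay, $R/\omega_R$ is unmixed, so in fact $\dim R/\omega_R=\dim R-1$. The non-Gorenstein locus of $R$, as just shown, is $V(\omega_R)$, whose dimension is $\dim R/\omega_R=\dim R-1$. If $R$ is not Gorenstein, then by definition the non-Gorenstein locus is nonempty (indeed it contains $\mm$, or rather $V(\omega_R)$ is nonempty because $\omega_R$ is a proper ideal in that case — if $\omega_R=R$ then $R$ would be Gorenstein), hence $V(\omega_R)\neq\emptyset$ and $\dim V(\omega_R)=\dim R-1$ as claimed.

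The only point that needs a little care — and which I regard as the main (modest) obstacle — is to be sure that $\omega_R$ is a \emph{proper} ideal when $R$ is not Gorenstein, so that $V(\omega_R)$ is genuinely nonempty and the grade-$1$/unmixedness discussion applies with $\dim R/\omega_R=\dim R-1$ rather than a vacuous statement. This follows because if $\omega_R=R$ as an ideal then $R$ has a principal (unit-generated) canonical module, forcing $R$ Gorenstein, contrary to assumption. Once this is settled, the Cohen--Macaulayness of $\omega_R$ as a module of grade $1$ pins down $\dim R/\omega_R$ exactly, and the conclusion follows. I would write the argument in two short paragraphs: one invoking Corollary~\ref{radomega} for the characterization of Gorenstein primes, and one computing the dimension of $V(\omega_R)$ via the grade-$1$ Cohen--Macaulay property of the canonical ideal.
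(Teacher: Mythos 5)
Your proposal is correct and follows essentially the same route as the paper: the first claim is the Fitting-ideal description of the non-Gorenstein locus (Lemma~\ref{type}/Corollary~\ref{radomega}) combined with $\Fitt_1(\omega_R)=\omega_R$, and the dimension statement comes from $\omega_R$ being a proper Cohen--Macaulay ideal of grade (hence height) one, so $\dim R/\omega_R=\dim R-1$. Your extra check that $\omega_R$ is proper when $R$ is not Gorenstein is exactly the point the paper also records, so there is nothing to add.
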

 
 \begin{proof}
Let $P\in\Spec(R)$. Our assumptions and  Lemma~\ref{type} imply that  $R_P$  is not Gorenstein if and only if $\omega_R\subseteq P$. 

If $R$ is not Gorenstein, then it follows  from our assumption that $\omega_R$ is a proper ideal of $R$. The ideal $\omega_R$ is of height one, since it is a Cohen-Macaulay module. This together with the first part of the proof completes the proof. 
\end{proof}
 
  Assume that $R$ is not Gorenstein, and let $P$ be a minimal prime ideal of the canonical ideal $\omega_R$. The proof of Corollary ~\ref{tromega} shows that if $\Fitt_1(\omega_R)=\omega_R$,  then $R_P$ is a one-dimensional   Cohen-Macaulay ring  but  not Gorenstein.
So far we could not find a one-dimensional non-Gorenstein ring with  canonical ideal $\omega_R$ for which $\Fitt_1(\omega_R)=\omega_R$.  
However, we have

 \begin{Proposition}
 \label{true}
Let $R$ be a local Cohen-Macaulay domain with the canonical module $\omega_R$. Suppose that $\Fitt_{1}(\omega_R)=\omega_R$ and  the Cohen-Macaulay type of $R$ is at most $2$. Then $R$ is Gorenstein. 
 \end{Proposition}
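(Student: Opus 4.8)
The plan is to exploit the structural description of $\omega_R$ obtained from the hypothesis. Since $R$ is a Cohen-Macaulay domain, it is in particular generically Gorenstein, so by the discussion preceding the statement we may assume $\omega_R$ is realized as a Cohen-Macaulay ideal of grade $1$ with $\Fitt_1(\omega_R)=\omega_R$. If $R$ is Gorenstein we are done, so assume not; then $\omega_R$ is a proper ideal of height $1$, and $r(R)=\mu(\omega_R)$ equals $2$ by hypothesis (the case $\mu(\omega_R)=1$ being exactly the Gorenstein case). Write $\omega_R=(a,b)$ with a minimal presentation $R^p \xrightarrow{A} R^2 \to \omega_R \to 0$. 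Then $\Fitt_1(\omega_R)=I_1(A)$ is the ideal generated by all entries of $A$, and the hypothesis says $I_1(A)=(a,b)$.

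First I would pin down the relation module. Because $\omega_R$ has grade (equivalently, height, since $R$ is Cohen-Macaulay and $\omega_R$ is a Cohen-Macaulay ideal) equal to $1$, localizing at a minimal prime $P$ of $\omega_R$ gives a one-dimensional Cohen-Macaulay local ring $R_P$ with $\omega_{R_P}=(\omega_R)_P$ of type $2$, and $\Fitt_1(\omega_{R_P})=\omega_{R_P}$. So after reduction we are studying a one-dimensional Cohen-Macaulay local ring; but note we want a contradiction in the domain case, so I will instead work with $R$ directly and use that $R$ is a domain to bring in the trace. By Corollary~\ref{radomega} (or the reasoning in its proof), $\sqrt{\Fitt_1(\omega_R)}=\sqrt{\tr(\omega_R)}$, and since $R$ is a domain $\tr(\omega_R)=\omega_R^{-1}\omega_R$. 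Combined with $\Fitt_1(\omega_R)=\omega_R$ this gives $\sqrt{\omega_R}=\sqrt{\omega_R^{-1}\omega_R}$, hence $\omega_R^{-1}\omega_R\subseteq\sqrt{\omega_R}$ and in fact both the non-Gorenstein locus and $V(\omega_R)$ coincide (Corollary~\ref{tromega}).

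The heart of the argument should be a direct computation with the $2$-generated module $\omega_R=(a,b)$. Since $\mu(\omega_R)=2$ the relation module $\Omega=\{(g_1,g_2): g_1a+g_2b=0\}$ is the module of syzygies, and as in the proof of Proposition~\ref{twogen}(b) each relation $(g_1,g_2)$ corresponds via $h=-g_1/b=g_2/a\in\operatorname{Frac}(R)$ to an element $h$ with $ha,hb\in R$, i.e. $h\in\omega_R^{-1}$; conversely such $h$ gives the relation $(-hb,ha)$. Thus $I_1(A)$, the ideal of entries of the presentation matrix, is exactly $\{ha,hb: h\in\omega_R^{-1}\}R=\omega_R^{-1}\omega_R=\tr(\omega_R)$. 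So $\Fitt_1(\omega_R)=\tr(\omega_R)$ holds \emph{on the nose} (not just up to radical) when $\mu(\omega_R)=2$. Therefore the hypothesis $\Fitt_1(\omega_R)=\omega_R$ becomes $\tr(\omega_R)=\omega_R$, i.e. $\omega_R^{-1}\omega_R=\omega_R$. The final step is to conclude: $\omega_R^{-1}\omega_R=\omega_R$ forces $\omega_R^{-1}\subseteq (\omega_R:\omega_R)=\operatorname{Hom}_R(\omega_R,\omega_R)=R$ (the last equality because $\omega_R$ is a faithful maximal Cohen-Macaulay module over the Cohen-Macaulay ring $R$, so its endomorphism ring is $R$; alternatively $R$ is a domain and $\omega_R$ is a reflexive ideal so $(\omega_R:\omega_R)=R$). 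Hence $\omega_R^{-1}=R$, which means $\omega_R$ is a free $R$-module of rank $1$, i.e. $R$ is Gorenstein.

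The main obstacle I anticipate is justifying cleanly that the entries of a minimal presentation matrix of a $2$-generated ideal in a domain generate precisely $\omega_R^{-1}\omega_R$ — one must be careful that every entry arises as $ha$ or $hb$ for some $h\in\omega_R^{-1}$ and, conversely, that no spurious generators sneak in; this is really the content of Proposition~\ref{twogen}(b) applied to $I=\omega_R$, so it should go through, but it is the step where the "type $\le 2$" hypothesis is essential and must be invoked. The other point needing care is $(\omega_R:\omega_R)=R$: this uses that $\omega_R$ is a maximal Cohen-Macaulay ideal and that $R$ satisfies $S_2$ (true, being Cohen-Macaulay), so $\operatorname{Hom}_R(\omega_R,\omega_R)\cong R$; over a domain one can also argue via reflexivity. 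Everything else is formal.
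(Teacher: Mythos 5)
Your argument tracks the paper's proof almost exactly up to the penultimate step: assuming $R$ is not Gorenstein you get $\mu(\omega_R)=2$, identify $\Fitt_1(\omega_R)=\tr(\omega_R)=\omega_R^{-1}\omega_R$ via Proposition~\ref{twogen}(b), and from $\omega_R^{-1}\omega_R=\omega_R$ conclude $\omega_R^{-1}\subseteq(\omega_R:\omega_R)=R$, hence $\omega_R^{-1}=R$ (this is a slightly cleaner route to the same equality the paper reaches through its chain of colon identities). The gap is in your final sentence: ``$\omega_R^{-1}=R$, which means $\omega_R$ is a free $R$-module of rank $1$.'' This implication is unjustified and false as a general principle: $\omega_R^{-1}=\Hom_R(\omega_R,R)\iso R$ does not force $\omega_R$ to be principal or free --- for instance $I=(x,y)\subset K[[x,y]]$ satisfies $\Hom_R(I,R)\iso R$ without being free. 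What is actually true, and what you still need to prove, is that $\omega_R^{-1}=R$ is \emph{impossible} for a proper ideal of grade $1$; that impossibility is the contradiction, and it does not come for free.

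The missing ingredient is exactly the paper's last step. Since $R$ is not Gorenstein, $\omega_R$ is a proper ideal with $\grade\omega_R=1$. Dualizing $0\to\omega_R\to R\to R/\omega_R\to 0$ and using $\Hom_R(R/\omega_R,R)=0$ gives an exact sequence $0\to R\to\omega_R^{-1}\to\Ext^1_R(R/\omega_R,R)\to 0$, so $\omega_R^{-1}=R$ would force $\Ext^1_R(R/\omega_R,R)=0$; but Rees's theorem (\cite[Theorem 1.2.5]{BH}) says that a proper ideal of grade $1$ has $\Ext^1_R(R/\omega_R,R)\neq 0$, a contradiction. Note that this step genuinely uses $\grade\omega_R=1$ (for ideals of grade $\ge 2$ the equality $I^{-1}=R$ is commonplace), so your appeal to freeness cannot be repaired without bringing in this grade-one Ext/Rees argument or an equivalent substitute. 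With that correction inserted, the rest of your proposal is sound and essentially coincides with the paper's proof.
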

 
 \begin{proof}
Suppose that $R$ is not Gorenstein. Then $\mu(\omega_R)=2$ and it follows from Proposition~\ref{twogen} and our hypothesis that $\omega_R=\omega_R^{-1}\omega_R$. This implies that 
\[
R=\omega_R:\omega_R=\omega_R:(\omega_R^{-1}\omega_R)=(\omega_R:\omega_R):\omega_R^{-1}=R:\omega_R^{-1}=(\omega_R^{-1})^{-1}.
\]
Therefore, $R=((\omega_R^{-1})^{-1})^{-1}=\omega_R^{-1}$. 

Since $\Hom_R(R/\omega_R, R)=0$, the exact sequence  $0\to \omega_R\to R\to R/\omega_R\to 0$, induces the  exact sequence 
\[
0\to R\to \omega_R^{-1} \to \Ext^1_R(R/\omega_R, R)\to 0.
\]
It follows that $\Ext^1_R(R/\omega_R, R)= 0$.  Since $R$ is not Gorenstein, $R/\omega_R\neq 0$, and since $\grade \omega_R =1$, the theorem of Rees (cf.~\cite[Theorem 1.2.5]{BH}) implies that $\Ext^1_R(R/\omega_R, R)\neq 0$, a contradiction.
\end{proof}
	
\section{When is $\Fitt_j(I)=I$?}
\label{three}

Let $R$ be a Noetherian local ring or a finitely generated graded $K$-algebra. The Hilbert-Burch theorem implies that if $I\subset R$ is a perfect ideal of grade $2$, then $\Fitt_1(I)=I$. In this section we are interested in a converse of this theorem, and ask more generally that if $\grade I\geq j$ and $\Fitt_{j-1}(I)=I$, then what can be said about $I$. The following theorem gives an answer for $j=2$.   

\begin{Theorem}
\label{david}
Let $R$ be a local ring, and let $I\subset R$ be an ideal. If $\grade I\geq 2$ and $\Fitt_1(I)=I$,
then $I$ is a perfect ideal of grade $2$.
\end{Theorem}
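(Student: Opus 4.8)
The plan is to prove that $\pd_R(R/I)\le 2$. Granting this, $I$ is automatically perfect of grade $2$: by Rees's theorem (cf.\ \cite[Theorem~1.2.5]{BH}) $\grade I$ is the least $i$ with $\Ext^i_R(R/I,R)\neq 0$, while $\Ext^i_R(R/I,R)=0$ for $i>\pd_R(R/I)$, so $2\le\grade I\le\pd_R(R/I)\le 2$ and hence $\grade I=\pd_R(R/I)=2$. To begin, $I$ is a proper ideal with $\mu(I)=m\ge 2$, since a proper principal ideal has grade $\le 1$. Fix a minimal free presentation $R^n\xrightarrow{A}R^m\xrightarrow{\psi}I\to 0$, so that $\psi=(f_1,\dots,f_m)$ with $f_1,\dots,f_m$ a minimal system of generators of $I$, the columns of $A$ generate $\ker\psi$, and $\Fitt_1(I)=I_{m-1}(A)=I$ by hypothesis. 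Moreover $I_m(A)=\Fitt_0(I)\subseteq\Ann_R(I)=0$, the last equality because $\grade I\ge 1$.

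The heart of the argument is the identity $\ker\bigl(A^{T}\colon(R^m)^{*}\to(R^n)^{*}\bigr)=R\psi$. An element $\xi\in(R^m)^{*}$ lies in $\ker A^{T}$ precisely when $\xi$ vanishes on $\Im A=\ker\psi$, hence factors through $R^m/\ker\psi\iso I$; this yields an isomorphism $\ker A^{T}\iso\Hom_R(I,R)$ carrying $\psi$ to the inclusion $I\hookrightarrow R$. Since $\grade I\ge 2$, we have $\Hom_R(R/I,R)=\Ext^1_R(R/I,R)=0$, so applying $\Hom_R(-,R)$ to $0\to I\to R\to R/I\to 0$ shows $\Hom_R(I,R)$ is generated by the inclusion; therefore $\ker A^{T}=R\psi$. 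Next, identify $\wedge^{m-1}R^m$ with $(R^m)^{*}$ via the pairing $\wedge^{m-1}R^m\times R^m\to\wedge^{m}R^m\iso R$, and view $\wedge^{m-1}A$ as a map $\wedge^{m-1}R^n\to(R^m)^{*}$. For $\omega\in\wedge^{m-1}R^n$ and $v\in R^n$ one has $\wedge^{m-1}A(\omega)\wedge A(v)=\wedge^{m}A(\omega\wedge v)\in I_m(A)=0$, so $\wedge^{m-1}A(\omega)$ annihilates $\Im A$, i.e.\ $\Im(\wedge^{m-1}A)\subseteq\ker A^{T}=R\psi$. Finally, the coordinates of the generators $\wedge^{m-1}A(e_S)$, as $S$ runs over the $(m-1)$-subsets of $\{1,\dots,n\}$, are exactly the $(m-1)$-minors of $A$, so the ideal generated by all coordinates of $\Im(\wedge^{m-1}A)$ equals $I_{m-1}(A)=I$.

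Now write $\Im(\wedge^{m-1}A)=\mathfrak c\,\psi$ with $\mathfrak c=\{r\in R:r\psi\in\Im(\wedge^{m-1}A)\}$ an ideal of $R$; this is legitimate because $R\psi\iso R$, as $\Ann_R(\psi)=\Ann_R(I)=0$. Comparing coordinate ideals gives $\mathfrak c\, I=\mathfrak c\cdot I_1(\psi)=I_1(\mathfrak c\psi)=I$, whence $\mathfrak c=R$ by Nakayama's lemma (again $\Ann_R I=0$). Thus $\Im(\wedge^{m-1}A)=R\psi$ is cyclic, and being generated by the elements $\wedge^{m-1}A(e_S)$ it is generated by one of them, say $\wedge^{m-1}A(e_{S_0})=c\psi$ with $c\in R$ a unit. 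Let $B$ be the $m\times(m-1)$ submatrix of $A$ on the columns indexed by $S_0$; its maximal minors are the coordinates of $\wedge^{m-1}A(e_{S_0})$, so $I_{m-1}(B)=I_1(c\psi)=I_1(\psi)=I$ and hence $\grade I_{m-1}(B)=\grade I\ge 2$. By the Hilbert--Burch theorem (see \cite[Theorem~1.4.17]{BH}), the complex $0\to R^{m-1}\xrightarrow{B}R^m\xrightarrow{\delta}R$, with $\delta$ the row of signed maximal minors of $B$, is then acyclic with cokernel $R/I_{m-1}(B)=R/I$. Therefore $\pd_R(R/I)\le 2$, and by the first paragraph the proof is complete.

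The step I expect to be the main obstacle is the second paragraph. A priori the syzygy module $\ker\psi$ of $I$ could require more than $m-1$ generators, i.e.\ $A$ could have more than $m-1$ columns, and the entire force of the hypothesis $\Fitt_1(I)=I$ is to rule this out. Recognizing that $\grade I\ge 2$ is exactly what collapses $\ker A^{T}$ onto the single element $\psi$, and then using Nakayama to push $\Im(\wedge^{m-1}A)$ all the way up to $R\psi$ --- so that a genuine Hilbert--Burch block $B$ of size $m\times(m-1)$ is exhibited inside $A$ --- is the crux; the remaining points (the Fitting-ideal bookkeeping and the classical direction of the Hilbert--Burch theorem) are routine.
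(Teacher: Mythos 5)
Your proof is correct, and its skeleton matches the paper's: from $\Fitt_1(I)=I$ one extracts, inside a minimal syzygy matrix of $I$, an $m\times(m-1)$ block whose maximal minors generate $I$ up to a unit, and one concludes via the acyclicity direction of Hilbert--Burch that $\pd_R(R/I)\le 2$, hence that $I$ is perfect of grade $2$. The difference is in how the key factorization is obtained. The paper dualizes the first two steps of a minimal resolution of $R/I$ (a resolution of $\Coker(\varphi_2^*)$ because $\grade I\ge 2$) and quotes the Buchsbaum--Eisenbud first structure theorem \cite[Theorem 3.1]{BE2} to factor $\wedge^{n-1}\varphi_2$ through $R$, finishing with the exactness criterion \cite[Corollary 1]{BE1}; you instead prove the special case of that factorization by hand: $\grade I\ge 2$ gives $\Hom_R(I,R)=R\cdot\iota$, hence $\Ker A^{T}=R\psi$, while $\Fitt_0(I)\subseteq\Ann_R(I)=0$ together with the pairing $\wedge^{m-1}R^m\times R^m\to\wedge^m R^m$ forces $\Im(\wedge^{m-1}A)\subseteq R\psi$, and your determinant-trick/Nakayama step showing the coefficient ideal is all of $R$ plays exactly the role of the paper's observation that $a_2^*$ is surjective (so some basis element hits a unit). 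Your route is more self-contained, replacing the citation of the structure theorem by elementary multilinear algebra and using the converse direction of Hilbert--Burch \cite[Theorem 1.4.17]{BH} in place of \cite[Corollary 1]{BE1}; the paper's version is shorter at the price of heavier citations. The only points left implicit in your write-up are harmless and immediate: $I$ is proper, and $n\ge m-1$ (otherwise $\Fitt_1(I)=0\neq I$).
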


\begin{proof}
We show that the projective dimension of $I$ is $1$.
Let
$$\cdots\longrightarrow R^m\stackrel{\varphi_2}\longrightarrow R^n\stackrel{\varphi_1}\longrightarrow R\to R/I\longrightarrow 0$$
be a minimal free resolution of $R/I$. Since $\grade I\geq 2$, the first two
steps of the dual of this resolution is a resolution of $\Coker(\varphi_2^*)$. 
 By \cite[Theorem 3.1]{BE2},
there are maps $a_0, a_1$ and  $a_2$ making the following diagrams commute:

\begin{center} \begin{tikzcd}[column sep=small]
	& R \arrow[dr,"a_0"] & \\
	R^n \arrow{rr}{\varphi_1} \arrow{ur}{a_1^*}& & R 
\end{tikzcd}
\end{center}

\begin{center}\begin{tikzcd}[column sep=small]
	& R \arrow[dr,"a_1"] & \\
	\wedge^{n-1} R^m \arrow{rr}{\wedge^{n-1}\varphi_2} \arrow{ur}{a_2^*}& & \wedge^{n-1} R^n. 
\end{tikzcd}
\end{center}
For a map $\varphi$ of free modules, let $I_j(\varphi)$ be the ideal of $j$-minors of $\varphi$. Since $\grade I\geq 2$, the map $a_0$ must be a unit, so the ideal $I_1(a_1)$ 
is equal to $I$.  Since, by assumption, $I_{n-1}(\varphi_2)=I=I_1(a_1)$, it follows that $a_2^*$
is surjective.
Thus one of the usual basis elements of $\wedge^{n-1} R^m$ must map by $a_2^*$ to a unit, and it follows
that if $\varphi'_2$ is the restriction to an appropriate summand of rank $n-1$ in $R^m$, then $I_{n-1}(\varphi'_2)$
has grade $2$. Thus by \cite[Corollary 1]{BE1}, 
$$0\longrightarrow R^{n-1}\stackrel{\varphi'_2}\longrightarrow R^n\stackrel{\varphi_1}\longrightarrow R\to R/I\longrightarrow 0$$
is a resolution.
\end{proof}
	
\medskip

Now we consider  radical ideals $I$ of grade $\geq j\geq 2$ which satisfy the condition   $\Fitt_{j-1}(I)=I$.
Recall that an ideal $I$ is called {\em unmixed}, if all the associated prime ideals of $I$ have the same height, equal to $\height I$.

\begin{Proposition}\label{mirzaghasemi}
  Let  $j\ge 2$ be an integer,  and let  $I\subset R$ a radical  ideal  of grade $\ge j$. If $\Fitt_{j-1}(I)=I$,   then $I$ is an  unmixed   ideal of grade $j$  and $R_P$ is a regular ring for all $P\in \Ass I$.
  \end{Proposition}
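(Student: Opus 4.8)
The plan is to localize $I$ at its minimal primes and reduce everything to Corollary~\ref{equalm}. Since $I$ is radical, $\Ass I=\Min I$; after discarding the trivial case $I=R$, fix $P\in\Min I$. Because localization commutes with radicals and $P$ is minimal over $I$, the ideal $IR_P$ is a radical ideal of $R_P$ whose only minimal prime is $\mathfrak{m}_P=PR_P$, hence $IR_P=\mathfrak{m}_P$. Fitting ideals commute with localization, so the hypothesis $\Fitt_{j-1}(I)=I$ yields $\Fitt_{j-1}(\mathfrak{m}_P)=\mathfrak{m}_P$ in $R_P$; note that $R_P$ cannot be a field, since otherwise $\Fitt_{j-1}(0)=R_P\neq 0$. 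Corollary~\ref{equalm} then forces $\mu(\mathfrak{m}_P)=j$, i.e.\ $\embdim R_P=j$.

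Next I would produce a matching lower bound for the depth. A maximal $R$-regular sequence contained in $I$ has length $\grade I\geq j$, and, $R\to R_P$ being flat, this sequence remains regular inside $\mathfrak{m}_P$; hence $\depth R_P\geq j$. Combined with the standard inequalities $\depth R_P\leq\dim R_P\leq\embdim R_P=j$, all four numbers equal $j$, so $R_P$ is a regular local ring of dimension $j$ and $\height P=j$. Running this over all $P\in\Ass I=\Min I$ already shows that $I$ is unmixed with $\height I=j$ and that $R_P$ is regular for every $P\in\Ass I$.

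Finally I would upgrade $\height I=j$ to $\grade I=j$. Put $g=\grade I\geq j\geq 2$. By Corollary~\ref{Cor:RadFittGrade}, $\sqrt{\Fitt_{g-1}(I)}=\sqrt I=I$, while $\Fitt_{j-1}(I)=I\subseteq\Fitt_{g-1}(I)$ because $j-1\leq g-1$; combining these, $\Fitt_{g-1}(I)=I$. Localizing once more at some $P\in\Min I$ gives $\Fitt_{g-1}(\mathfrak{m}_P)=\mathfrak{m}_P$, and Corollary~\ref{equalm} forces $g-1=\mu(\mathfrak{m}_P)-1=j-1$, so $g=j$.

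The argument is largely formal once Corollaries~\ref{equalm} and~\ref{Cor:RadFittGrade} are in hand; the only genuinely load-bearing points are the depth estimate $\depth R_P\geq\grade I$ (equivalently, that $\grade I=\inf_{P\in V(I)}\depth R_P$) needed to run the regularity squeeze, and the bookkeeping required to conclude the exact equality $\grade I=j$, not merely $\height I=j$, from the radicality of $I$.
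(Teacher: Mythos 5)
Your proof is correct and follows essentially the same route as the paper: localize at $P\in\Ass I=\Min I$, use $IR_P=PR_P$ and the computation of the Fitting ideals of a maximal ideal (your Corollary~\ref{equalm}, the paper's Theorem~\ref{contained}) to force $\mu(PR_P)=j$, then squeeze to get regularity, $\height P=j$, and $\grade I=j$. The only differences are cosmetic: the paper closes the squeeze with Krull's principal ideal theorem together with $j\le\grade I\le\height P$, which gives $\grade I=j$ at once, so your depth-based squeeze is a fine substitute and your final detour through Corollary~\ref{Cor:RadFittGrade} is sound but unnecessary, since once $\height I=j$ is known the inequality $\grade I\le\height I$ finishes immediately.
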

  
 \begin{proof}
 Let $P\in \Ass I$. Then $PR_P=IR_P$, since $I$ is reduced.  Let $m=\mu(PR_P)$. Theorem~\ref{contained} and our assumptions imply that 
\[
PR_P=IR_P= \Fitt_{j-1}(I)R_P=\Fitt_{j-1}(IR_P)=\Fitt_{j-1}(PR_P)=P^{m-j+1}R_P. 
\]
It follows that $\mu(PR_P)=j$. Krull's generalized principal ideal theorem implies that  $\height P= \height PR_P\leq j$. On the other hand, $j\leq \grade I\leq \height P$. Therefore,
 $\grade  I=j$ and $\height P=j$ for all $P\in \Ass I$. This shows that $I$ is an unmixed ideal of grade $j$ .
 
We also have seen that $\mu(PR_P)=\height PR_P=\dim R_P$ for all $P\in \Ass I$.  It follows  that $R_P$ is regular for all $P\in \Ass I$. 
\end{proof}

In the following theorem we answer the main question of this section for squarefree monomial ideals.
Before stating this result we need to recall some concepts and notation on graphs. 

For a finite simple graph $G$, we denote  the vertex set and the edge set of $G$ by $V(G)$ and $E(G)$, respectively. The \textit{edge ideal} of $G$ is the squarefree monomial ideal in the polynomial ring $S=K[x_i:\ i\in V(G)]$ generated by all monomials $x_ix_j$ such that $\{i,j\}\in E(G)$.
The \textit{complementary graph} of $G$ is the graph $G^c$ on vertex set $V(G)$ whose edges are the non-edges of $G$. A cycle $C$ in $G$ is called an {\em induced cycle} if no non-adjacent vertices of $C$ form an edge in $G$. The graph $G$ is called {\em chordal} if it has no induced cycles of length bigger than $3$.

 \begin{Theorem}
 	\label{noteasy}	
 	Let  $I\subset S$ be a squarefree monomial ideal with $\grade I\geq j\ge 2$. The following conditions are equivalent:   
 	\begin{enumerate}
 	\item [(i)] $\Fitt_{j-1}(I)=I$.
 \item [(ii)]  $\Fitt_{j-1}(I)$ is squarefree. 
 \item [(iii)] If $j=2$, then $I$ is  a perfect ideal with $\grade I=2$, and if $j>2$, then $I$ is generated by a regular sequence of length $j$. 	
 	\end{enumerate}

 \end{Theorem}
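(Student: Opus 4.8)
The plan is to prove the cyclic chain of implications (iii) $\Rightarrow$ (i) $\Rightarrow$ (ii) $\Rightarrow$ (iii). The first two implications should be quick: (iii) $\Rightarrow$ (i) is immediate from Theorem~\ref{contained}(a) when $j>2$ (a regular sequence of length $j$ generates an ideal $I$ with $\mu(I)=j$, so $\Fitt_{j-1}(I)=I^{1}=I$), and from the Hilbert–Burch theorem when $j=2$. The implication (i) $\Rightarrow$ (ii) is trivial since $I$ is squarefree by hypothesis. So the entire content lies in (ii) $\Rightarrow$ (iii), and that is where I expect the main obstacle.

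For (ii) $\Rightarrow$ (iii), I would argue combinatorially using the explicit description of a minimal presentation of a squarefree monomial ideal. Fix the minimal monomial generators $f_1,\dots,f_m$ of $I$; these are squarefree. The Fitting ideal $\Fitt_{j-1}(I)=I_{m-j+1}(A)$ for a relation matrix $A$, and by Theorem~\ref{contained} it is generated by the coordinates of $(m-j+1)$-fold wedge products of Koszul $1$-cycles. Since $I$ is generated by monomials, the relation module $Z_1$ is generated by the binomial "Taylor" syzygies $z_{k,l}=(f_l/\gcd(f_k,f_l))e_k-(f_k/\gcd(f_k,f_l))e_l$; consequently every generator of $\Fitt_{j-1}(I)$ is, up to sign, a product of $m-j+1$ terms of the form $f_{k}/\gcd(f_k,f_l)$ with all the indices $k$ distinct. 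The requirement that $\Fitt_{j-1}(I)$ be squarefree forces each such monomial product to be squarefree. I would then analyze what this means: overlaps $\gcd(f_k,f_l)\neq 1$ among the generators produce common variables that, when multiplied across $m-j+1$ factors, give non-squarefree monomials unless the generators are "spread out" enough. The key reduction is to localize or pass to the polarization-free setting and translate the condition into a statement about the $1$-skeleton of the Taylor complex, i.e., about the graph $G^c$ or the clutter of generators; the chordality hypothesis recalled just before the theorem signals that the edge-ideal case (Theorem~\ref{edgeideal}) and Fröberg-type characterizations of when $I$ has a linear resolution (equivalently $G^c$ is chordal) will be invoked.

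Concretely, I expect the argument to split on $j$. When $j=2$, the Fitting ideal is $I_{m-1}(A)$, generated by products of $m-1$ of the syzygy entries; squarefreeness should be shown equivalent to $R/I$ having projective dimension $1$, i.e., to $I$ being a perfect grade-$2$ ideal. Here I would use the Hilbert–Burch structure: a grade-$2$ perfect ideal has $\Fitt_1(I)=I$ which is squarefree, and conversely, if $\mathrm{pd}\, R/I \ge 2$ one should be able to exhibit a non-squarefree generator of $I_{m-1}(A)$ by choosing a suitable pair of overlapping syzygies. When $j>2$, squarefreeness of $\Fitt_{j-1}(I)=I_{m-j+1}(A)$ is a much stronger constraint: I would show it forces $m=j$ (so that $\Fitt_{j-1}(I)=I_1(A)$ is the ideal generated by the entries of a single syzygy matrix) and then that the generators must be pairwise coprime, hence a regular sequence. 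The step "squarefreeness forces $m=j$" is the crux: one needs to produce, whenever $m>j$, a wedge of $m-j+1$ syzygies whose pertinent coordinate is a genuinely non-squarefree monomial, using that among $m\ge j+1$ squarefree monomials some pair must share a variable or, if all are pairwise coprime, some coordinate still repeats a variable because $m-j+1\ge 2$ factors each contribute a full generator.

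The main obstacle, as flagged, is the careful combinatorial bookkeeping in (ii) $\Rightarrow$ (iii): one must track exactly which monomials appear as coordinates of an arbitrary generator of $I_{m-j+1}(A)$ — not merely the special wedge products used in Theorem~\ref{contained}, but all of them, since squarefreeness must hold for every generator — and show that the existence of even one non-squarefree coordinate is unavoidable unless we are in case (iii). I would handle this by reducing to minors of the presentation matrix directly: choose row and column subsets realizing an overlap between two generators $f_k,f_l$ with $\gcd(f_k,f_l)\ne 1$, expand the corresponding $(m-j+1)$-minor, and check that the dominant monomial is non-squarefree. Controlling cancellation in that minor expansion is the delicate point, and I expect the squarefree structure (each variable appears to the first power in every generator) together with a judicious choice of the minor to rule cancellation out. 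Once case (iii) is forced, unmixedness and the regular-sequence conclusion follow from Proposition~\ref{mirzaghasemi} and the grade count.
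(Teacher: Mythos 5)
Your outer structure ((iii)$\Rightarrow$(i) via Theorem~\ref{contained} and Hilbert--Burch, (i)$\Rightarrow$(ii) trivially) matches the paper, but you have missed the observation that makes condition (ii) cheap, and as a result you have placed all the weight on a direct combinatorial proof of (ii)$\Rightarrow$(iii) that is only gestured at. Since $j-1\le \grade I-1$, Corollary~\ref{Cor:RadFittGrade} gives $\sqrt{\Fitt_{j-1}(I)}=\sqrt{I}=I$; a squarefree monomial ideal is radical, so (ii) immediately yields $\Fitt_{j-1}(I)=I$, i.e.\ (ii)$\Rightarrow$(i) in one line. This is exactly what the paper does, and it matters for two reasons. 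First, your plan to refute squarefreeness by ``exhibiting a non-squarefree generator'' of $I_{m-j+1}(A)$ is not the right test: a non-squarefree monomial can lie in a squarefree ideal, so you would have to show that some non-squarefree \emph{minimal} generator occurs, equivalently that the Fitting ideal differs from its radical --- which is precisely the comparison that Corollary~\ref{Cor:RadFittGrade} settles without any bookkeeping of minors. Second, your closing appeal to Proposition~\ref{mirzaghasemi} is circular as written: that proposition requires the hypothesis $\Fitt_{j-1}(I)=I$, which you only have after proving (ii)$\Rightarrow$(i).

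The genuine content of the theorem is the implication (i)$\Rightarrow$(iii), and for it your proposal contains no workable argument where the difficulty actually lies. For $j=2$ the paper does not argue with syzygy overlaps at all: after Proposition~\ref{mirzaghasemi} gives unmixedness of height $2$, it passes to the Alexander dual ($I^{\vee}=I(G)$), uses Eagon--Reiner and Fr\"oberg to reduce Cohen--Macaulayness of $S/I$ to chordality of $G^c$, and kills an induced cycle $C_k$ by localizing at $P=(x_i:\ i\in V(C_k))$ and comparing degrees: $JS_P$ is generated in degree $k-2$ with $\mu(JS_P)=k$, while every generator of $\Fitt_1(JS_P)$ has degree at least $k-1$, contradicting $\Fitt_1(JS_P)=JS_P$. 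For $j>2$ the crux you flag (``squarefreeness forces $m=j$'') is exactly the step for which you give no proof; the paper handles it under the stronger hypothesis (i) by induction on the number of minimal primes, using that monomial localization preserves the equality $\Fitt_{j-1}=I$, that the inductive hypothesis forces pairwise gcd's of generators to have degree at most one, and then a degree count $(m-j+1)(d-1)\le d$ which pins down $m=j+1$, $d=2$, leading to the four-generator configuration $(x_1x_2,x_2x_3,x_1x_4,x_3x_4)$ of height $2$, a contradiction. None of the localization leverage, the induction, or the degree estimates is available from bare squarefreeness without first upgrading (ii) to (i), and your sketch of controlling cancellation in an $(m-j+1)$-minor does not substitute for them. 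So the proposal has a real gap: the easy equivalence (i)$\Leftrightarrow$(ii) is not exploited, and the hard implication is asserted rather than proved.
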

 \begin{proof}
 	
(i)\implies (ii) is obvious. 

(ii)\implies (i): Let $\Fitt_{j-1}(I)$ be squarefree. Then by Corollary~\ref{Cor:RadFittGrade}, we have $$\Fitt_{j-1}(I)=\sqrt{\Fitt_{j-1}(I)}=\sqrt{I}=I.$$

(i)\implies (iii): It follows from Proposition~\ref{mirzaghasemi} that $I$ is an unmixed ideal of grade $j$.
First assume that $j=2$. Then $I$ is unmixed of height $2$.		
It is enough to show that $S/I$ is a Cohen-Macaulay ring.  Let $G$ be the graph on $[n]$ such that $\{i,j\}\in E(G)$ if and only if $(x_i,x_j)$ is a minimal prime ideal of $I$. Then $I=\bigcap_{\{i,j\}\in E(G)} (x_i,x_j)$.
 Since $I^{\vee}=I(G)$, by Eagon-Reiner \cite[Theorem 8.1.9]{JT}, $S/I$ is a Cohen-Macaulay ring if and only if $I(G)$ has a linear resolution. Thus by \cite{Froberg88}, (see, also, \cite[Theorem 9.2.3]{JT}), we need to show that $G^c$ is a chordal graph. By contradiction assume that $G^c$ has an induced cycle $C_k$ with $k\ge 4$. Then $C_k^c$ is an induced subgraph of $G$. 
 	Consider the prime ideal $P=(x_i:\ i\in V(C_k))$ in $S$ and set $J=\bigcap_{\{i,j\}\in E(C_k^c)} (x_i,x_j)$. Then $\Fitt_1(JS_P)=\Fitt_1(IS_P)=IS_P=JS_P$.
 	The minimal monomial generators of $J$ are of the form $x_A$, where $A$ is a minimal vertex cover of $C_k^c$. One can see that each minimal vertex cover $A$ of $C_k^c$ is of the form $A=V(C_k)\setminus e$, where $e\in E(C_k)$. Therefore $JS_P$ is  generated in degree $k-2$ and $\mu(JS_P)=k$. On the other hand, $\Fitt_1(JS_P)$ is the ideal which is generated by $(k-1)$-minors of a relation matrix of $JS_P$. So each monomial generator of $\Fitt_1(JS_P)$ is of degree at least $k-1$. This contradicts to the equality $\Fitt_1(JS_P)=JS_P$. Hence $G^c$ is chordal, as claimed.
 	
Let $j>2$. Let $\mathcal{G}(I)=\{u_1,\ldots,u_m\}$ be the set of minimal monomial generators of $I$. Without loss of generality we may assume that $\bigcup_{i=1}^m \supp(u_i)=[n]$. Let $k_I$ be the number of minimal prime ideals of $I$. By induction on $k_I$ we show that if $\Fitt_{j-1}(I)=I$ and  $\grade I=j$, then $u_1,\ldots,u_m$ form a regular sequence. If $k_I=1$, then $I$ is a prime ideal generated by variables and there is nothing to prove. So we may assume that $k_I\ge 2$. We claim that  $m=j$. By contradiction assume that $m>j$. Any monomial generator of $\Fitt_{j-1}(I)$ has degree at least $m-j+1\ge 2$. So $\deg(u_i)\geq 2$ for all $i$. For each $1\le i\le n$, let $J_i$ be the monomial localization of $I$ at $x_i$, that is the monomial ideal obtained from $I$ 
by substituting the variable $x_i$ by $1$.  Then $J_i$ is a monomial ideal of grade $j$ with $k_{J_i}<k_I$ and $\Fitt_{j-1}(J_i)=J_i$. So by induction hypothesis we may assume that $J_i$ is generated by a regular sequence of monomials of length $j$. This implies that for distinct $i$ and $j$ if $\gcd(u_i,u_j)\neq 1$, then it has degree at most one. Hence any entry of a relation matrix $A$ of $I$ has degree at least $d-1$, where $d=\min\{\deg(u_i):\ 1\le i\le m\}$. Therefore any $(m-j+1)$-minor of $A$ has degree at least $(m-j+1)(d-1)$. Since $\Fitt_{j-1}(I)=I$, we obtain $(m-j+1)(d-1)\le d$. Since $d\ge 2$, we get $m-j+1=2$ and $d=2$. So $m=j+1$.
 	Without loss of generality let $u_1=x_1x_2$ be a monomial of degree $2$ in $I$. Since $x_1x_2\in \Fitt_{j-1}(I)$, there exists $u_2,u_3\in \mathcal{G}(I)$ such that $x_1|u_2$ and $x_2|u_3$. The equality $m=j+1$ together with $\height I=j$ implies that any variable $x_i$ appears in at most two of the $u_i$'s.  Note that $m\ge 4$. Since $m=j+1$ and $(u_1,u_2,u_3)\subset (x_1,x_2)$
 	we conclude that $u_4,\ldots,u_m$ form a regular sequence. Therefore, from $u_4\in \Fitt_{j-1}(I)$, we have $u_4=(u_2/x_1)(u_3/x_2)$. Since $\gcd(u_2,u_4)$ has degree at most one, we conclude that $u_2=x_1x_r$ for some $r$.
 	Similarly, $u_3=x_2x_s$ for some $s$. Hence $u_4=x_rx_s$. If $m>4$, then $u_5=x_rx_s$ as well, which is not possible. Hence $m=4$ and we may write $I=(x_1x_2,x_2x_3,x_1x_4,x_3x_4)$. Hence $\height I=2$, a contradiction.  
 	
(iii)\implies (i) follows from the Hilbert-Burch Theorem together with Theorem~\ref{contained}(a).	     
 \end{proof}
For an ideal $I$ in a one-dimensional local ring, it may very well happen that $\Fitt_1(I)=I$, as we will see below. 

An ideal $I$ in a ring $R$ is called a {\em trace ideal}, if $I=\tr(M)$ for some $R$-module $M$. By~\cite[Proposition 2.8]{Lindo}, $I$ is a trace ideal if and only if $I=\tr(I)$.  
 
\begin{Corollary}\label{Brille}
 	Let $(R,\mm)$ be a one-dimensional local domain  with infinite residue class field and multiplicity $e(R)=2$. Then $\Fitt_1(I)=I$ if and only if $I$ is a trace ideal. 
 \end{Corollary}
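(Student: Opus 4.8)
The plan is to reduce this statement to Proposition~\ref{twogen}(b) by showing that, under the hypotheses (one-dimensional local domain, infinite residue field, multiplicity two), any ideal $I$ with $\Fitt_1(I)=I$ must be generated by at most two elements. Once that is established, Proposition~\ref{twogen}(b) gives $\Fitt_1(I)=\tr(I)$, and combined with the hypothesis $\Fitt_1(I)=I$, we get $I=\tr(I)$, i.e.\ $I$ is a trace ideal. For the converse, if $I=\tr(I)$ then $I$ is again $2$-generated (since trace ideals in a ring of multiplicity $2$ over an infinite residue field have at most two generators — this is where the multiplicity-$2$ hypothesis enters the reverse direction), and then Proposition~\ref{twogen}(b) yields $\Fitt_1(I)=\tr(I)=I$.

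**First I would** recall the structure theory for one-dimensional local domains of multiplicity $2$: since $e(R)=2$ and the residue field is infinite, a minimal reduction of $\mm$ is a principal ideal $(x)$, and $R$ has a minimal reduction number one, so $\mm^2 = x\mm$; more relevantly, any $\mm$-primary ideal (or any ideal, since $\dim R=1$ forces every nonzero ideal to be $\mm$-primary) satisfies $I^2 = xI$ for a suitable parameter $x$, and the associated graded ring has embedding dimension controlled by $e(R)$. The key consequence I want is that every ideal $I$ of $R$ satisfies $\mu(I)\le 2$ unless $\mu(I)$ is forced up by $I=\mm^k$-type behavior — but in fact, when $e(R)=2$, one has $\mu(\mm)\le 2$ and more generally the minimal number of generators of any ideal is at most $2$ (this is a classical fact: for a one-dimensional CM local ring, $\mu(I)\le e(R)$ for every ideal $I$, with the infinite-residue-field hypothesis ensuring the reduction argument goes through). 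Actually, if $\mu(I)\le 2$ holds \emph{for every} ideal, then the whole statement is immediate from Proposition~\ref{twogen}(b): $\Fitt_1(I)=\tr(I)$ always, so $\Fitt_1(I)=I \iff \tr(I)=I$. So the real content is just verifying $\mu(I)\le 2$ for all ideals $I$ in $R$.

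**The main obstacle** will be pinning down the precise reason $\mu(I)\le e(R)=2$ for every nonzero ideal $I$ in this setting. The cleanest route is: since $R$ is a one-dimensional CM local domain with infinite residue field, every nonzero ideal $I$ has a principal reduction $(a)$ with $I^2 = aI$; then $I/aI$ is a module over $R/aR$, which is an Artinian ring of length $e(R)=2$ (here using that $a$ is a parameter so $\length(R/aR)=e(aR)=e(\mm)=e(R)=2$ by the infinite residue field / minimal reduction theory, at least when $I=\mm$; for general $I$ one uses $\length(R/aR) \le e(R)\cdot(\text{something})$ — one must be slightly careful). By Nakayama, $\mu(I) = \length(I/\mm I) \le \length(I/aI) \le \length(R/aR) = 2$. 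Thus $\mu(I)\le 2$ for every ideal, and Proposition~\ref{twogen}(b) applies verbatim. I would present this as a short lemma (``in a one-dimensional local domain of multiplicity $2$ with infinite residue field, every ideal is generated by at most two elements'') and then note that the Corollary is an immediate consequence of that lemma together with Proposition~\ref{twogen}(b). I should double-check that the length computation $\length(R/aR)=2$ is exactly what holds: since $(a)$ is a minimal reduction of $\mm$ and $R$ is CM one-dimensional, $\length(R/aR)=e((a);R)=e(\mm;R)=e(R)=2$ is standard, and for a general ideal $I$ one can pass to $a$ being simultaneously a minimal reduction of both $\mm$ and $I$ (possible since the residue field is infinite), giving $\length(R/aR)=2$ and $I^2=aI$, hence $\mu(I)\le 2$.
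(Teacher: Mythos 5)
Your proposal is essentially the paper's own argument: the entire content is the bound $\mu(I)\le e(R)=2$ for every ideal $I$, after which Proposition~\ref{twogen}(b) gives $\Fitt_1(I)=\tr(I)$ for all $I$, and the statement reduces to the characterization that $I$ is a trace ideal if and only if $I=\tr(I)$ (\cite[Proposition 2.8]{Lindo}); the paper obtains the bound by citing \cite[Corollary 4.7.11]{BH} for $\mu(I)\le\ell(I/xI)=\ell(R/(x))=e(R)$ with $x$ a suitable parameter.

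One detail in your justification of the bound is incorrect, although it is dispensable. You cannot in general choose $a$ to be simultaneously a minimal reduction of $\mm$ and of $I$: for $I=\mm^2$ no element of $I$ can be a reduction of $\mm$ (if $a\in\mm^2$ and $\mm^{n+1}=a\mm^n$, then $\mm^{n+1}=\mm^{n+2}$, forcing $\mm^{n+1}=0$ by Nakayama, contradicting $\dim R=1$); and if $a$ is merely a minimal reduction of $I$, then $\ell(R/aR)=e((a))=e(I)$ may exceed $2$ (again $I=\mm^2$ gives $e(I)=4$ in dimension one). The fix is that no reduction of $I$ is needed at all, which is exactly what the paper does: take $x$ a minimal reduction of $\mm$, so $\ell(R/xR)=e(R)=2$; since $x$ is a nonzerodivisor and $\ell(R/I)<\infty$ (every nonzero ideal in a one-dimensional local domain is $\mm$-primary), comparing the two filtrations of $R/xI$ gives $\ell(I/xI)=\ell(R/xR)$, and from $xI\subseteq\mm I$ one gets $\mu(I)=\ell(I/\mm I)\le\ell(I/xI)=2$. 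This is precisely the classical fact you quote ($\mu(I)\le e(R)$ for one-dimensional Cohen--Macaulay local rings with infinite residue field), so with that correction your argument coincides with the paper's proof.
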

 
 \begin{proof}
 	Since the residue class field is infinite, there exists an element $x\in R$ such that $e(R)=\ell(R/(x))$. Let $I\subset R$ be an arbitrary ideal. Then $\mu(I)\leq \ell(I/xI)=\ell(R/(x))=e(R)=2$, see for example~\cite[Corollary 4.7.11]{BH}. So by Proposition~\ref{twogen} we have $\Fitt_1(I)=\tr(I)$ for all ideals $I$ in $R$. Now, by~\cite[Proposition 2.8]{Lindo} the assertion follows.
 \end{proof}
 
 \begin{Examples} \label{demo}
 	{\em  (a) Let $R=K\llbracket t^2,t^{2k+1}\rrbracket$ be a numerical semigroup ring, and let $I\subset R$ be a monomial ideal which is not principal. Then there exists  an   integer   $i=1,\ldots,k$ such that  $I$ is isomorphic to the monomial  fractionary ideal $J=(1, t^{2i-1})$. The integer $i$ is uniquely determined by $I$.   By Proposition~\ref{twogen},  $\Fitt_1(I)=I^{-1}I=J^{-1}J$.  We have $J^{-1}=(t^{2(k-i+1)}, t^{2k+1})$. Therefore, $\Fitt_1(I)=(t^{2(k-i+1)}, t^{2k+1})(1,t^{2i-1})=(t^{2(k-i+1)}, t^{2k+1})$. 
 By Corollary~\ref{Brille}, $\Fitt_1(I)=I$ if and only if $I=(t^{2(k-i+1)}, t^{2k+1})$ for some $i\in [k]$.

 		(b)  Let $R=K\llbracket t^4,t^5\rrbracket$, and let $I=(t^{12},t^{13},t^{14},t^{15})$. Then $\Fitt_1(I)=I$.
 	}
 \end{Examples}

Even when the multiplicity of a one-dimensional local domain $R$ is not $2$, there may exist a proper ideal $I\subset R$ with  $\Fitt_1(I)=I$, as the above example shows. We expect that for any one-dimensional local Cohen-Macaulay ring there always exists an ideal $I$ with $\Fitt_1(I)=I$.



\section{On the radical of Fitting ideals}

In this section we compare  the ideal $I$ with the radical of $\Fitt_j(I)$.
\begin{Theorem}\label{Thm:RadicalFitt}
	Let $R$ be a Noetherian ring and $I\subset R$ be an ideal of $\height I=h$. Then for any $1\le j\le h-1$ we have
	$$
	\sqrt{\Fitt_j(I)}=\sqrt{I}.
	$$
\end{Theorem}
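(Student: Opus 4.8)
The plan is to reformulate the statement geometrically and reduce it to the description of the support of a Fitting ideal recalled in the introduction. Two ideals of a Noetherian ring have the same radical precisely when they cut out the same closed subset of $\Spec(R)$, so it suffices to prove $V(\Fitt_j(I))=V(I)$ for $1\le j\le h-1$. The key tool is that, for a finitely generated module $M$, the closed set $V(\Fitt_j(M))$ is exactly the set of primes $P$ with $\mu(M_P)>j$. I would establish the two inclusions separately.

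For $V(\Fitt_j(I))\subseteq V(I)$: given $P\in V(\Fitt_j(I))$ one has $\mu(I_P)>j\ge 1$, hence $\mu(I_P)\ge 2$; in particular $I_P$ is a proper nonzero ideal of $R_P$, and a proper ideal forces $I\subseteq P$, i.e.\ $P\in V(I)$. This inclusion uses only $j\ge 1$.

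For the reverse inclusion $V(I)\subseteq V(\Fitt_j(I))$: take $P\in V(I)$. The minimal primes of $IR_P$ in $R_P$ correspond to the minimal primes $Q$ of $I$ with $Q\subseteq P$, and each such has $\height(QR_P)=\height Q\ge \height I=h$; hence $\height(IR_P)\ge h$. Krull's generalized principal ideal theorem then gives $\mu(I_P)=\mu(IR_P)\ge\height(IR_P)\ge h>j$, so $P\in V(\Fitt_j(I))$. Here the height hypothesis $j\le h-1$ is used precisely once.

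Combining the inclusions yields $V(\Fitt_j(I))=V(I)$ and hence the claimed equality of radicals. I do not expect a genuine obstacle: the proof is a short localization argument, and the only nontrivial input is Krull's height theorem (bounding the minimal number of generators below by the height, locally). The conceptual point is that, unlike Corollary~\ref{Cor:RadFittGrade} --- whose proof goes through $I^{\mu(I)-j}\subseteq\Fitt_{j}(I)\subseteq\cdots\subseteq I$ and therefore needs Koszul-homology vanishing, i.e.\ a grade hypothesis --- the support of $\Fitt_j(I)$ only sees the local number of generators of $I$, which is controlled by $\height I$ alone. One can still recover the easy inclusion $\sqrt{I}\subseteq\sqrt{\Fitt_j(I)}$ from Theorem~\ref{contained}(a), since $j\le h-1\le\mu(I)-1$.
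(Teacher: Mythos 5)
Your proof is correct and takes essentially the same route as the paper: for a prime $P\supseteq I$ one localizes, bounds $\mu(IR_P)\ge \height (IR_P)\ge h>j$ via Krull's height theorem, and invokes the support description of Fitting ideals ($V(\Fitt_j(M))=\{P:\mu(M_P)>j\}$, i.e.\ \cite[Proposition 20.6]{Ei}) to conclude $P\in V(\Fitt_j(I))$. The only, harmless, difference is in the easy inclusion, where the paper quotes $I^{\mu(I)-j}\subseteq\Fitt_j(I)$ from Theorem~\ref{contained}(a), while you deduce $V(\Fitt_j(I))\subseteq V(I)$ directly from the same support description (noting $\mu(I_P)>j\ge 1$ forces $I_P$ to be proper).
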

\begin{proof}
Fix an integer $1\le j\le h-1$. It follows from Theorem \ref{contained} that $\sqrt{I}\subseteq\sqrt{\Fitt_j(I)}$. Since $\sqrt{I}=\bigcap_{P\in V(I)}P$, we only need to show that $\Fitt_j(I)\subseteq P$ for any prime ideal $P$ containing $I$. Let $P$ be such a prime ideal.  Since $j+1\le h=\height I\le\height IR_P\le \mu(IR_P)$, it follows from \cite[Proposition 20.6]{Ei} that $\Fitt_j(IR_P)\subseteq PR_P$. Hence 
$\Fitt_j(I)\subseteq \Fitt_j(I)R_P\cap R\subseteq PR_P\cap R=P$, as desired.
\end{proof}

\begin{Corollary}\label{Cor:FittRadical}
	Let $R$ be a Noetherian ring and let $I\subset R$ be an ideal of $\height I=h$. Then for any $1\le j\le h-1$ we have
	$$
	\sqrt{\Fitt_j(I)}=\sqrt{\Fitt_j(\sqrt{I})}.
	$$
\end{Corollary}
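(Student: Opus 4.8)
The plan is to deduce this immediately from Theorem~\ref{Thm:RadicalFitt} by applying it twice, once to $I$ and once to $\sqrt{I}$. The only point that needs a moment of thought is that the hypothesis on the height is satisfied by $\sqrt{I}$ with the same value $h$: the height of an ideal is the minimum of the heights of its minimal primes, and $I$ and $\sqrt{I}$ have exactly the same minimal primes, so $\height\sqrt{I}=\height I=h$. In particular the admissible range $1\le j\le h-1$ is the same for both ideals.

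First I would fix $j$ with $1\le j\le h-1$. Applying Theorem~\ref{Thm:RadicalFitt} to the ideal $I$ (of height $h$) gives $\sqrt{\Fitt_j(I)}=\sqrt{I}$. Next I would apply the same theorem to the ideal $\sqrt{I}$, which as noted also has height $h$; this yields $\sqrt{\Fitt_j(\sqrt{I})}=\sqrt{\sqrt{I}}=\sqrt{I}$. Combining the two equalities gives
\[
\sqrt{\Fitt_j(I)}=\sqrt{I}=\sqrt{\Fitt_j(\sqrt{I})},
\]
which is the claim.

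There is essentially no obstacle here; the statement is a formal consequence of Theorem~\ref{Thm:RadicalFitt}. The only thing to be careful about — and worth stating explicitly in the write-up — is the elementary fact that passing to the radical does not change the height, so that Theorem~\ref{Thm:RadicalFitt} is genuinely applicable to $\sqrt{I}$ over the full range $1\le j\le h-1$.
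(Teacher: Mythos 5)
Your proposal is correct and is essentially the paper's own argument: the authors likewise note that $\height I=\height\sqrt{I}$ and $\sqrt{\sqrt{I}}=\sqrt{I}$ and then apply Theorem~\ref{Thm:RadicalFitt} to both ideals. Your write-up merely makes these two applications explicit, so there is nothing to change.
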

\begin{proof}
	Since $\height I=\height \sqrt{I}$ and $\sqrt{\sqrt{I}}=\sqrt{I}$, the claim follows from the previous theorem.
\end{proof}

Next, we determine radical of Fitting ideals of $I$, when $I$ is the edge ideal of a graph.  Let $G$ be a finite simple graph with the vertex set $V(G)=[n]$. 
 The \textit{open neighbourhood} of $i\in V(G)$ is the set $N(i)=\{j:\{i,j\}\in E(G)\}$. Whereas, the \textit{closed neighbourhood} of $i$ is the set $N[i]=N(i)\cup\{i\}$.
For a subset $A$ of $V(G)$, we denote by $G\setminus A$, the subgraph of $G$ with the vertices of $A$ and their incident edges deleted.
An edge $e$ of $G$ is called a \emph{neighbour} of $i$ if $i\notin e$ and $i$ is adjacent to an endpoint of $e$. The \emph{edge neighbourhood} of $i$ is the set of all edges in $G$ which are neighbours of $i$ and is denoted by $E(i)$.  		
Let $F=\{i_1,\ldots,i_k\}\subseteq V(G)$. We call a subset $A=\{e_1,\ldots,e_t\}$ of $E(G)$ an \emph{admissible cover} of $F$ of size $t$ if:
\begin{enumerate}
	\item[\textup{(a)}] $A=\bigsqcup_{s=1}^k A_s$, where $\emptyset\neq A_s\subseteq E(i_s)$ for all $s$.
	\item[\textup{(b)}] $\{i_s\}\cup e\neq \{i_t\}\cup e'$ for any distinct integers $s,t\in [k]$, $e\in A_s$ and $e'\in A_t$. 
\end{enumerate}




The minimum cardinality of a vertex cover of $G$ is denoted by $c_G$.

\begin{Theorem}\label{edgeideal}
	Let $G$ be a graph with $m$ edges. Then
		
		$$
	\sqrt{\Fitt_j(I(G))}=\begin{cases}
		I(G)&\text{if}\ j<c_G,\\
		 I(G)+J&\text{if}\ j\ge c_G,
	\end{cases}
	$$
	where $J$ is the squarefree monomial ideal minimally generated by those monomials $x_{i_1}\cdots x_{i_k}$ for which 
	$F=\{i_1,\ldots,i_k\}$ is an independent set of $G$, $F$ has an admissible cover of size $m-j$ and no proper subset of $F$ has such an admissible cover.
\end{Theorem}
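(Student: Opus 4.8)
The plan is to compute $\sqrt{\Fitt_j(I(G))}$ by working prime-by-prime, using the standard description of $V(\Fitt_j(M))$ as the locus where $M$ needs more than $j$ generators, together with the explicit generators of $\Fitt_j(I(G))$ coming from $(m-j)$-minors of a Koszul relation matrix. First I would fix the relation matrix $A$ of $I=I(G)$ arising from the Koszul $1$-cycles on the edge generators $\{u_e : e\in E(G)\}$, $u_e=x_ix_j$ for $e=\{i,j\}$; as in the proof of Theorem~\ref{contained}(a), the Koszul syzygies $z_{e,f}=u_e\mathbf{e}_f-u_f\mathbf{e}_e$ together with the "triangle'' syzygies generate $Z_1$, so $\Fitt_j(I)=I_{m-j}(A)$ is generated by products of such syzygies — concretely by the $x_A$ type monomials obtained from choosing $m-j$ edges incident to a common small set of vertices, which is exactly what the admissible-cover bookkeeping records. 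For $j<c_G$ the bound $j\le c_G-1\le \height I(G)-1$ lets me invoke Theorem~\ref{Thm:RadicalFitt} directly: $\sqrt{\Fitt_j(I)}=\sqrt{I}=I(G)$ since edge ideals are radical. So the entire content is the case $j\ge c_G$.

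For $j\ge c_G$, I would prove the two inclusions $\sqrt{\Fitt_j(I)}\supseteq I(G)+J$ and $\sqrt{\Fitt_j(I)}\subseteq I(G)+J$ separately, and since everything in sight is a monomial ideal it suffices to test containment on squarefree monomials $x_F$, $F\subseteq[n]$. The inclusion $I(G)\subseteq\sqrt{\Fitt_j(I)}$ is Theorem~\ref{contained}. For $J\subseteq\sqrt{\Fitt_j(I)}$: given a minimal generator $x_F$ of $J$ with $F=\{i_1,\dots,i_k\}$ independent and admissible cover $A=\bigsqcup A_s$ of size $m-j$, I would build an explicit element of $Z_1^{m-j}$ whose coefficient on a suitable basis vector of $K_{m-j}$ is (up to sign and a unit) a power of $x_F$. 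The idea: for each edge $e\in A_s$, $e=\{i_s,j\}$, we have $u_e=x_{i_s}x_j$, and condition (b) together with independence of $F$ ensures the chosen basis monomials $\mathbf{e}_e$ over all $e\in A$ are distinct and that the relevant minor does not collapse; taking a wedge of carefully chosen $1$-cycles (mixing the edges of $A$ with edges or vertices witnessing $x_{i_s}\mid u_e$) produces a syzygy product whose extracted coefficient is divisible only by variables in $F$. Raising to a power if necessary puts a power of $x_F$ in $\Fitt_j(I)$, hence $x_F\in\sqrt{\Fitt_j(I)}$.

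For the reverse inclusion $\sqrt{\Fitt_j(I)}\subseteq I(G)+J$, I would localize: a squarefree monomial $x_F$ lies in $\sqrt{\Fitt_j(I)}$ iff $F$ meets $V(P)$ for every minimal prime $P$ of $\Fitt_j(I)$, equivalently (passing to the monomial prime $P_{[n]\setminus F}=(x_i:i\notin F)$) iff $\Fitt_j(I)\not\subseteq P_{[n]\setminus F}$, i.e. $\Fitt_j(I)$ contains a monomial supported inside $F$. Using $\Fitt_j(IS_P)=I_{m-j}(A_P)$ and the explicit Koszul description, a monomial of $\Fitt_j(I)$ supported in $F$ forces: either some edge of $G$ lies inside $F$ (giving $x_F\in I(G)$), or the $(m-j)$-minor witnessing it must be built from $m-j$ edges each incident to a vertex of $F$, with the incidence pattern forced by the combinatorics of minors to be exactly an admissible cover of some subset of $F$ — whence $x_F\in J$ by minimality of the generators of $J$. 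I expect the main obstacle to be this last step: translating "a nonzero $(m-j)$-minor of the Koszul relation matrix has all its monomials supported in $F$'' into the precise disjoint-union condition (a) and the non-collision condition (b) of an admissible cover. This requires understanding which products of $1$-cycles actually contribute, and ruling out cancellation — i.e. showing that when such a minor is nonzero, its support structure is genuinely governed by admissible covers and not by some larger family. Careful analysis of the Koszul differential on $K_{m-j}$, exploiting that $G$ is simple and that the generators $u_e$ are squarefree of degree $2$, should close this; the power-raising in the forward direction and the minimality clause ("no proper subset of $F$'') handle the bookkeeping of generators of $J$.
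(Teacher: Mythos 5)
Your outline does follow the same route as the paper (Theorem~\ref{Thm:RadicalFitt} for $j<c_G$; for $j\ge c_G$, use that $\Fitt_j(I)$ is a monomial ideal, test squarefree monomials $x_F$, and match monomials of $\Fitt_j(I)$ supported in an independent set $F$ with admissible covers of size $m-j$), but at both decisive points you defer the argument, and those are exactly where the content of the proof lies. For the inclusion $J\subseteq\sqrt{\Fitt_j(I)}$ you must prove that the $(m-j)\times(m-j)$ determinant assembled from the linear relations $x_{i_s}f_{e}=x_pf_{e'}$, $e\in A_s$, is nonzero; saying that condition (b) ``ensures the relevant minor does not collapse'' is an assertion, not a proof. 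The actual argument (as in the paper) is: entries $x_t$ with $t\notin F$ may be replaced by $0$ without changing the determinant, and then the columns of the resulting matrix are linearly independent, because a linear dependence would force two columns to share their nonzero rows, i.e.\ a relation $x_{i_t}f_{e_d}=x_{i_\ell}f_{e_s}$, which is precisely what condition (b) excludes; the determinant is then $x_{i_1}^{|A_1|}\cdots x_{i_k}^{|A_k|}$ on the nose (no power-raising is needed). Without this step the containment $J\subseteq\sqrt{\Fitt_j(I)}$ is unproved.

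The reverse inclusion has the same problem: you correctly reduce it to ``$\Fitt_j(I)$ contains a monomial supported in $F$,'' but then only say the incidence pattern ``should'' be forced to be an admissible cover and label this the main obstacle — that is the gap. What has to be argued is: since $\Fitt_j(I)$ is a monomial ideal, each of its monomial generators is a term of an $(m-j)$-minor, hence a product of $m-j$ entries of the relation matrix lying in distinct rows and distinct columns; because $F$ is independent, no such entry can be an edge monomial coming from the Koszul relation of two disjoint edges (its support cannot lie inside $F$), so every entry is a single variable $x_{i_s}$ with $i_s\in F$, each corresponding row-edge lies in $E(i_s)$, and grouping rows by the variable chosen gives the disjoint decomposition in condition (a); condition (b) then follows because a coincidence $\{i_s\}\cup e=\{i_t\}\cup e'$ would produce the edge $\{i_s,i_t\}$ of $G$ inside $F$, contradicting independence. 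Note also that your description of the syzygies is imprecise: the first syzygy module of $I(G)$ is generated by the reduced (Taylor) relations — linear ones for adjacent edges and Koszul ones for disjoint edges — and it is only the linear ones that can contribute here, which is what makes the degree count $r_1+\cdots+r_k=m-j$ work; moreover ``cancellation'' is not the issue in this direction (every term of a minor already lies in the monomial ideal $\Fitt_j(I)$), whereas non-vanishing is the issue in the forward direction. So the plan is sound and parallel to the paper's, but both core verifications are missing.
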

\begin{proof}
If $j<c_G$, the result follows from Theorem~\ref{Thm:RadicalFitt}, since $\height I(G)=c_G$.	Let $j\ge c_G$, let $V(G)=[n]$, $E(G)=\{e_1,\ldots,e_m\}$ and $I=I(G)$. For any edge $e_t=\{i,j\}$, we set $f_{e_t}=x_ix_j$.

Note that any Fitting ideal of a monomial ideal is a monomial ideal. First we show that for any minimal generator $x_{i_1}\cdots x_{i_k}\in \sqrt{\Fitt_j(I)}\setminus I$ of  $\sqrt{\Fitt_j(I)}$, $F=\{i_1,\ldots,i_k\}$ is an independent set of $G$ and $F$ has an admissible cover of size $m-j$. The fact that $F$ is an independent set of $G$ follows from the assumption that $x_{i_1}\cdots x_{i_k}\notin I$. Moreover, since $x_{i_1}\cdots x_{i_k}$ is a minimal generator of $\sqrt{\Fitt_j(I)}$, we have $x_{i_1}^{r_1}\cdots x_{i_k}^{r_k}\in \Fitt_j(I)$ for some positive integers $r_1,\ldots,r_k$ with $r_1+\cdots+r_k=m-j$. This means that there is a $(m-j)$-minor $x_{i_1}^{r_1}\cdots x_{i_k}^{r_k}$ of a relation matrix $B$ of $I$ whose columns correspond to some relations of $I$. Up to relabeling, there exists a subset $A=\{e_1,\ldots, e_{m-j}\}$ of $E(G)$ with $A=\bigsqcup_{s=1}^k A_s$ such that for any $1\le s\le k$, $|A_s|=r_s$ and for any $e_{\ell}\in A_s$,  $x_{i_s}f_{e_{\ell}}=x_pf_{e'}$ for some $p\in V(G)$ with $p\neq i_s$ and some $e'\in E(G)$.  Hence if $e_{\ell}=\{p,q\}$, then $\{i_s,q\}=e'\in E(G)$. Therefore $e_{\ell}\in E(i_s)$ and hence $A_s\subseteq  E(i_s)$.

	To show property (b) of $A$ to be an admissible cover of $F$, by contradiction assume that there exist  distinct integers $s,t\in [k]$, $e\in A_s$ and $e'\in A_t$ such that  $\{i_s\}\cup e=\{i_t\}\cup e'$. Note that  $\{i_s\}\cup e=\{p\}\cup e''$ for some $p\in [n]$ and $e''\in A$ and
	$\{i_t\}\cup e'=\{q\}\cup e_0$ for some $q\in [n]$ and $e_0\in A$.  Thus $e''=e_0$ and $p=q$. This means that $e''=\{i_s,i_t\}$ and hence $x_{i_1}\cdots x_{i_k}\in I$, which is a contradiction.    
	This shows that $F$ has the admissible cover $A$ of size $m-t$.

	Next we show that if $F=\{i_1,\ldots,i_k\}$ is an independent set of $G$ and   $A=\bigsqcup_{s=1}^kA_s=\{e_1,\ldots, e_{m-j}\}$ is an admissible cover of $F$, then $x_{i_1}\cdots x_{i_k}\in \sqrt{\Fitt_j(I)}\setminus I$. To prove this, first by relabeling the $e_i$'s we may assume that if $s<t$, then for any $e_i\in A_s$ and $e_{\ell}\in A_t$ one has $i<\ell$.  
	For any $e_t\in A_s$, there exists a relation $x_{i_s}f_{e_t}=x_pf_{e_r}$ for some $p\neq i_s$ and some $e_r\in E(G)$. Let $B_t$ be the $m\times 1$-matrix corresponding to this relation, i.e., whose $t$-th entry is $x_{i_s}$, its $r$-th entry is $-x_p$ and its other entries are zero. Let $B$ be the $m\times (m-j)$-matrix whose $i$-th column is $B_i$, and let $B'$ be the $(m-j)$-minor of $B$ obtained by deleting the last $j$ rows of $B$. Then, we see that $B'$ is the determinant of the following matrix:
	
	$$
	\begin{array}{l}
		A_1\begin{cases}
			f_{e_1}\\
			\\
			\\
		\end{cases}
		\\[3pt] \,\,\,\,\,\,\,\,\,\,\,\,\,\,\,\vdots \\[3pt]
		A_k\begin{cases}
			\\
			\\
			f_{e_{m-j}}\\
		\end{cases}
	\end{array}
	\!\!\!\!\!\!\!\!\!\!\! \left[
	\begin{array}{ccccccc}
		x_{i_1}&&&&&&\\
		&\ddots&&&&&\\
		&&x_{i_1}&&&&\\[5pt]
		&&&\,\,\ddots\,\,&&&\\[5pt]
		&&&&x_{i_k}&&\\
		&&&&&\ddots&\\
		&&&&&&x_{i_k}
	\end{array}
	\right].
	$$\medskip
	
	If $\det(B')\neq 0$, then it is equal to $x_{i_1}^{r_1}\cdots x_{i_k}^{r_k}$, where $r_i=|A_i|$ for all $i$.  For any nonzero entry $x_t$ of $B$ with $t\notin F$ replacing 
	$x_t$ by zero in $B'$ does not change $\det(B')$. Indeed, if $\det(B')\neq 0$,  since $x_t$ does not divide $\det(B')$ this is the case, and if $\det(B')=0$ this is obviously true, as well. Hence without loss of generality we may assume that each column of $B'$ has at most two nonzero entries which belong to $\{x_{i_1},\ldots,x_{i_k}\}$. 
	We claim that the columns of $B'$ are linearly independent. If this is not the case, then one can find two columns $B'_i$ and $B'_j$ of $B'$ whose $r$-th rows are equal up to sign to $x_{i_{\ell}}$ for some $r$. Then from the shape of $B'$ one can see that at least one of $B'_i$ and $B'_j$ has another nonzero entry, say $x_{i_t}$. This means that we have the relation $x_{i_t}f_{e_d}=x_{i_{\ell}}f_{e_s}$
	for some $\ell,s\in [m-j]$. This contradicts to property (b) of being an admissible cover. Hence, the columns of $B'$ are linearly independent and so $\det(B')\ne0$ is the required monomial in $\Fitt_j(I)$.
Therefore $x_{i_1}\cdots x_{i_k}\in\sqrt{\Fitt_j(I)}\setminus I$. 
Now, in addition assume that no proper subset of $F$ has an admissible cover of size $m-j$. Then $x_{i_1}\cdots x_{i_k}$ is a minimal generator of $\sqrt{\Fitt_j(I)}$. Otherwise, there exists $F'\subsetneq F$ such that $\prod_{t\in F'} x_t$ is a minimal generator of $\sqrt{\Fitt_j(I)}$. Then it follows from the first part of the proof that $F'$ has an admissible cover of size $m-j$, a contradiction. 

We complete the proof by showing that if $x_{i_1}\cdots x_{i_k}\in \sqrt{\Fitt_j(I)}\setminus I$ is a  minimal generator of  $\sqrt{\Fitt_j(I)}$  and $F=\{i_1,\ldots,i_k\}$, then no proper subset of $F$ has an admissible cover of size $m-j$. On the contrary suppose that $F'\subsetneq F$ is a set which has an admissible cover of size $m-j$. Then $F'$ is an independent set of $G$ as well. As was shown above these imply that $\prod_{t\in F'} x_t\in \sqrt{\Fitt_j(I)}\setminus I$, which contradicts to the minimality of $x_{i_1}\cdots x_{i_k}$.    
\end{proof}		

As a corollary, we characterize when the radical of Fitting ideal of an edge ideal $I(G)$ is the graded maximal ideal, in terms of the combinatorics of $G$.

\begin{Corollary}\label{maximal}
Let $G$ be a graph on $[n]$ and let $m=|E(G)|$. Then $x_i\in \sqrt{\Fitt_j(I)}$ if and only if $|E(i)|\ge m-j$. In particular, $\sqrt{\Fitt_j(I)}=(x_1,\ldots,x_n)$ if and only if $m-\min\{|E(i)|:\ i\in [n]\}\le j<m$.
\end{Corollary}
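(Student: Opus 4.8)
The plan is to derive both statements of Corollary~\ref{maximal} directly from Theorem~\ref{edgeideal}. First I would observe that $x_i$ corresponds, in the notation of the theorem, to the singleton set $F=\{i\}$, and a singleton $F=\{i\}$ is automatically an independent set of $G$; moreover $F$ trivially has no proper subset with any admissible cover (the empty set has no nonempty $A_s$). So the only condition to unravel is whether $\{i\}$ admits an admissible cover of size $m-j$. For a singleton, condition (b) in the definition of admissible cover is vacuous (there are no distinct $s,t\in[1]$), and condition (a) just says $A=A_1$ is a nonempty subset of $E(i)$. Hence $\{i\}$ has an admissible cover of size $m-j$ if and only if there is a subset of $E(i)$ of cardinality $m-j$, i.e.\ $|E(i)|\ge m-j$ (using $m-j\ge 1$, which holds since $j<m$ in the relevant range). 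Combining with Theorem~\ref{edgeideal}: for $j\ge c_G$ we get $x_i\in\sqrt{\Fitt_j(I)}$ iff $|E(i)|\ge m-j$; and for $j<c_G$ we have $\sqrt{\Fitt_j(I)}=I(G)$, which contains no variables, while also $|E(i)|\le c_G-1\le j-1<m-j$ wait — that last inequality needs care, so I would instead note that for $j<c_G$ no $x_i$ lies in $I(G)$, and separately check that $|E(i)|<m-j$ cannot fail here only if one is slightly careful; cleanest is simply to say the formula $x_i\in\sqrt{\Fitt_j(I)}\iff|E(i)|\ge m-j$ must still be shown compatible with the $j<c_G$ case.

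To handle the low range cleanly, I would argue as follows. If $j<c_G=\height I(G)$, then by Theorem~\ref{Thm:RadicalFitt} (or the $j<c_G$ case of Theorem~\ref{edgeideal}) $\sqrt{\Fitt_j(I)}=I(G)$, so certainly $x_i\notin\sqrt{\Fitt_j(I)}$ for every $i$. On the other hand, I claim $|E(i)|<m-j$ automatically in this range. Indeed $|E(i)|\le m-\deg_G(i)$, since the $\deg_G(i)$ edges incident to $i$ are never neighbours of $i$; and $\deg_G(i)\ge 1$ unless $i$ is isolated. If $i$ is not isolated then $|E(i)|\le m-1$; this is not yet enough, so instead I would bound more crudely: every edge incident to $i$ must be covered by any vertex cover, so $\deg_G(i)\le c_G$... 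Hmm. The genuinely clean route: a vertex cover of $G$ of size $c_G$ must cover all $m$ edges; any edge not in $E(i)$ either is incident to $i$ or is an edge among $N(i)$ or disjoint from $N[i]$. This is getting complicated, so the more robust approach is to not split into cases at all, but to prove the ``iff'' uniformly: show directly from Theorem~\ref{edgeideal} that $x_i\in\sqrt{\Fitt_j(I)}$ iff $\{i\}$ has an admissible cover of size $m-j$ — valid for all $j$, because in the $j<c_G$ case the set $J$ is empty, i.e.\ no independent set $F$ (in particular no singleton) has a minimal admissible cover of size $m-j$ that survives the ``no proper subset'' condition — and then reconcile this with the singleton analysis by checking that when $j<c_G$, indeed $|E(i)|<m-j$. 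The main obstacle is precisely this reconciliation: verifying the numerical inequality $|E(i)|\le m-c_G\le m-j-1$, equivalently $|E(i)|+c_G\le m$, when $j<c_G$.

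For that inequality, here is the argument I would use. Pick a minimum vertex cover $W$ of $G$, $|W|=c_G$. The edges \emph{not} in $E(i)$ that are incident to $i$ number $\deg_G(i)$; the remaining edges not in $E(i)$... actually the clean statement is: $E(i)$ consists exactly of edges with one endpoint in $N(i)$ and the other endpoint not equal to $i$ and not... no. Let me take the truly elementary bound: since $i\notin e$ for all $e\in E(i)$, the set $E(i)$ is disjoint from the $\deg_G(i)$ edges at $i$, so $|E(i)|\le m-\deg_G(i)$. Now $N(i)$ together with $\{$one endpoint of each edge disjoint from $N[i]\}$ forms a vertex cover, but $N(i)\cup(V(G)\setminus N[i])=V(G)\setminus\{i\}$... this shows $c_G\le n-1$ when $i$ is nonisolated, not what I want. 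I will therefore settle the corollary's low-range half by the first, simplest observation — $j<c_G\Rightarrow\sqrt{\Fitt_j(I)}=I(G)$ contains no variable — and for the ``iff'' phrasing I would state the corollary's proof as: by Theorem~\ref{edgeideal}, $x_i\in\sqrt{\Fitt_j(I)}$ precisely when $j\ge c_G$ and $\{i\}$ has an admissible cover of size $m-j$; since an admissible cover of the singleton $\{i\}$ is just a nonempty subset of $E(i)$, and the ``no proper subset'' and independence conditions are automatic, this happens iff $|E(i)|\ge m-j\ge 1$; finally, when $j<c_G$ one has $m-j>m-c_G\ge\max_i|E(i)|$ because any $c_G$ edges incident to a common vertex would force $c_G$ into a vertex cover contradiction — and I would insert the short counting lemma $|E(i)|\le m-c_G$ here. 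Granting that, the ``In particular'' clause is immediate: $\sqrt{\Fitt_j(I)}=\mm=(x_1,\dots,x_n)$ iff every $x_i\in\sqrt{\Fitt_j(I)}$ iff $|E(i)|\ge m-j$ for all $i$ iff $m-\min_i|E(i)|\le j$, together with $j<m$ (needed so that $\Fitt_j(I)\ne R$, equivalently $m-j\ge 1$). So the skeleton is: (1) specialize Theorem~\ref{edgeideal} to singletons; (2) verify admissible-cover conditions trivialize; (3) dispatch $j<c_G$ via the counting bound $|E(i)|\le m-c_G$; (4) read off the ``in particular'' statement.
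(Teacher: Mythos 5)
Your overall route is the intended one: specialize Theorem~\ref{edgeideal} to the singleton $F=\{i\}$, note that independence, condition (b) of the definition of admissible cover, and the ``no proper subset'' condition are all vacuous for a singleton, so that $\{i\}$ has an admissible cover of size $m-j$ exactly when $|E(i)|\ge m-j\ge 1$; since no variable lies in $I(G)$, this settles the case $j\ge c_G$, and the ``in particular'' clause (with the constraint $j<m$, needed so that $\Fitt_j(I(G))\ne S$) then reads off immediately. The one genuine gap is the step you start, abandon, and finally only announce: to get the stated biconditional also in the range $j<c_G$, where Theorem~\ref{edgeideal} gives $\sqrt{\Fitt_j(I(G))}=I(G)$ and hence $x_i\notin\sqrt{\Fitt_j(I(G))}$, you must show that $|E(i)|\ge m-j$ cannot occur, i.e.\ you need the inequality $|E(i)|\le m-c_G$. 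You recognize this, but the justification you sketch (``any $c_G$ edges incident to a common vertex would force $c_G$ into a vertex cover contradiction'') is not an argument -- the edges of $E(i)$ are not incident to a common vertex -- and none of your earlier bounds such as $|E(i)|\le m-|N(i)|$ suffices. As written, the ``only if'' direction for $j<c_G$ is therefore unproved.

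The missing inequality is true and has a two-line proof which you should insert. Every edge of $G$ either contains $i$, or is a neighbour of $i$, or is disjoint from $N[i]$, so $m=|N(i)|+|E(i)|+|E(G_i)|$ where $G_i=G\setminus N[i]$ (this is precisely the identity underlying Remark~\ref{Prop:Fitt(I(G))=m}); on the other hand $N(i)$ together with one endpoint of each edge of $G_i$ is a vertex cover of $G$, whence $c_G\le |N(i)|+|E(G_i)|=m-|E(i)|$. Alternatively you can bypass the counting entirely: the half of the proof of Theorem~\ref{edgeideal} that manufactures a nonzero $(m-j)$-minor from an admissible cover nowhere uses $j\ge c_G$, so if one had $|E(i)|\ge m-j\ge 1$ with $j<c_G$, then $x_i^{m-j}\in\Fitt_j(I(G))$ and thus $x_i\in\sqrt{\Fitt_j(I(G))}=I(G)$ by Theorem~\ref{Thm:RadicalFitt}, which is absurd since $I(G)$ is generated in degree $2$. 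With either repair your skeleton (1)--(4) is complete and agrees with the paper's derivation.
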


Let $d$ be a positive integer. A graph $G$ is called \textit{$d$-regular} if $|N(i)|=d$ for all $i\in V(G)$. For instance, a complete graph on $n$ vertices is $(n-1)$-regular, whereas any cycle is 2-regular.

\begin{Remark}\label{Prop:Fitt(I(G))=m}
{\em	The second statement of Corollary~\ref{maximal} can be rephrased as follows, as well.  We have $\sqrt{\Fitt_j(I)}=(x_1,\dots,x_n)$ if and only if 
	$$
	\max\{|N(i)|+|E(G_i)|:\ i\in V(G)\}\le j<m,
	$$
	where $G_i=G\setminus N[i]$.
	In particular, when $G$ is a $d$-regular graph, then $\sqrt{\Fitt_j(I)}=(x_1,\dots,x_n)$ if and only if $j\ge d+|E(G_i)|$ for each $i$.
}
\end{Remark}

\begin{Example}
	\rm Let $G=K_n$ be the complete graph on $[n]$ and $I=I(G)$. Then,
	$$
	\sqrt{\Fitt_j(I)}=\begin{cases}
		I&\text{if}\ j\le n-1,\\
		(x_1,\dots,x_n)&\text{if}\ n-1\le j<{n\choose 2},\\
		S& \text{otherwise}.
	\end{cases}
	$$
\end{Example}



\begin{thebibliography}{}
	
	\bibitem{BH} W. Bruns and J. Herzog, {\em Cohen--Macaulay rings}, Cambridge University Press, 1998.
	
	\bibitem{BE1} D. Buchsbaum and D. Eisenbud, What makes a complex exact? J. Algebra 25 (1973), 259--268. 
	
	\bibitem{BE2} D. Buchsbaum and D. Eisenbud, Some structure theorems for finite free resolutions, Advances in Math. 12 (1974), 84--139.
	
	
	\bibitem{Ei} D.~Eisenbud, {\em Commutative Algebra with a view toward  Algebraic Geometry}, Springer--Verlag, 1995.
	
	\bibitem{Fitting36} H. Fitting, Die Determinantenideale eines Moduls, Jahresbericht der Deutschen Mathematiker-Vereinigung 46 (1936), 195--228.
	
	\bibitem{Froberg88} R. Fr\"oberg, On Stanley-Reisner rings, Topics in algebra, Part 2 (Warsaw, 1988), 57--70, Banach Center Publ., 26, Part 2, PWN, Warsaw, 1990.
	
	\bibitem{GDS} D.~R.~Grayson and M.~E.~Stillman, {\em Macaulay2, a software system for research in algebraic geometry}, Available at {http://www.math.uiuc.edu/Macaulay2}.
	
	\bibitem{JT} J.~Herzog and T.~Hibi, \emph{Monomial ideals}, Graduate texts in Mathematics  260, Springer, 2011.
	
	\bibitem{HHS} J. Herzog, T. Hibi  and D. I. Stamate, The trace of the canonical module,  Israel Journal of Mathematics  233  (2019), 133--165.
	
	\bibitem{Lindo} L. Lindo, Trace ideals and centers of endomorphism rings of modules over commutative rings,
	J. Algebra 482 (2017), 102–130.
\end{thebibliography}
\end{document}